  \CheckCommand*\refstepcounter[1]{\stepcounter{#1}%
      \protected@edef\@currentlabel
       {\csname p@#1\endcsname\csname the#1\endcsname}%
  }
  \renewcommand*\refstepcounter[1]{\stepcounter{#1}%
    \protected@edef\@currentlabel
      {\csname p@#1\expandafter\endcsname\csname the#1\endcsname}%
  }
  \def\labelformat#1{\expandafter\def\csname p@#1\endcsname##1}
  \DeclareRobustCommand\Ref[1]{\protected@edef\@tempa{\ref{#1}}%
     \expandafter\MakeUppercase\@tempa
  }
  \newcommand{\numberlike}[2]{%
     \expandafter\def\csname c@#1\endcsname{%
         \expandafter\csname c@#2\endcsname}%
  }
  \def\DefaultNumberTheoremWithin{section}
  \theoremstyle{plain}
  \newtheorem{Lemma}{Lemma}
     \numberwithin{Lemma}{\DefaultNumberTheoremWithin}
     \numberwithin{Claim}{\DefaultNumberTheoremWithin}
  \newtheorem{Theorem}{Theorem}
     \numberwithin{Theorem}{\DefaultNumberTheoremWithin}
  \newtheorem{Corollary}{Corollary}
     \numberwithin{Corollary}{\DefaultNumberTheoremWithin}
  \newtheorem{Proposition}{Proposition}
     \numberwithin{Proposition}{\DefaultNumberTheoremWithin}
     \numberwithin{Conjecture}{\DefaultNumberTheoremWithin}
  \theoremstyle{definition}
     \numberwithin{Definition}{\DefaultNumberTheoremWithin}
  \theoremstyle{definition}
  \newtheorem{Question}{Question}
     \numberwithin{Question}{\DefaultNumberTheoremWithin}
  \theoremstyle{definition}
     \numberwithin{Problem}{\DefaultNumberTheoremWithin}
  \theoremstyle{remark}
     \numberwithin{Remark}{\DefaultNumberTheoremWithin}
  \theoremstyle{remark}
  \newtheorem{Example}{Example}
     \numberwithin{Example}{\DefaultNumberTheoremWithin}
     \numberwithin{Case}{Lemma}
     \numberwithin{Step}{Lemma}
  \def\eqref{\ref}
  \newcommand{\defn}[1]{\textit{\textbf{#1}}}
  \newcommand{\mb}{\mathbb}
  \newcommand{\mc}{\mathcal}
  \newcommand{\Zeta}{Z}
  \newcommand{\redH}{\widetilde{H}}
  \newcommand{\redC}{\widetilde{C}}
  \newcommand{\redM}{\widetilde{M}}
  \newcommand{\Hom}{H}
  \def\binomial(#1,#2){{#1\choose #2}}
\def\delplus(#1,#2,#3){\ensuremath{\pi}_{#1,#2,#3}}
\def\delminus(#1,#2){\ensuremath{\operatorname{\partial}}_{#1,#2}}
  \def\redword(#1){{\ensuremath{\tilde{#1}}}}
  \newcommand{\sgn}{\ensuremath{\operatorname{sgn}}}
  \newcommand{\JJJ}{\mc{J}}
  \newcommand{\LLL}{\mc{L}}
  \newcommand{\PPP}{\mc{P}}
  \newcommand{\KK}{\mb{K}}
  \newcommand{\ccf}{\mathfrak{c}}
  \newcommand{\lcm}{\mathrm{lcm}}
  \newcommand{\Sy}{\mathcal{S}}
  \newenvironment{note}[1][Note]
   {\bigskip\begin{center}\begin{boxedminipage}{4.5in}\setlength{\parindent}{1em}\noindent\textbf{#1. }}
   {\end{boxedminipage}\end{center}\bigskip}
\begin{document}

 \title[Subadditivity of shifts]{Subadditivity of shifts, Eilenberg--Zilber shuffle products and homology of lattices}


\date{\today}

  \author[K. Adiprasito]{Karim Adiprasito}
  \address{IMJ-PRG, CNRS, Sorbonne Université and Université Paris Cité, F-75005 Paris, France}
\email{karim.adiprasito@imj-prg.fr}

\author{Anders Bj\"orner}
     \address{Department of Mathematics \\
              Royal Institute of Technology (KTH) \\
              S-100 44 Stockholm \\
              Sweden}
     \email{bjoerner@math.kth.se}

 \author[J. Hakavuori]{Joel Hakavuori}
 \address{IMJ-PRG, Sorbonne Université, F-75005 Paris, France} 
 \email{hakavuori@imj-prg.fr}

 \author[M. Margaritis]{Minas Margaritis}
   \address{Department of Mathematics\\ National and Kapodistrian University of Athens\\ Panepistimioupolis\\ 15784 Athens\\ Greece}
   \email{minos2472002@gmail.com}

 \author[V. Welker]{Volkmar Welker}
    \address{Fachbereich Mathematik und Informatik\\             Philipps-Universit\"at Marburg\\
             35032 Marburg\\             Germany}
   \email{welker@mathematik.uni-marburg.de}

\begin{abstract}
We show that the maximal shifts in the minimal free resolution of 
the quotients of a polynomial ring by a monomial ideal are subadditive as a function of the homological degree. This answers a question that has received some attention in recent years. To do so, we define and study a new model for the homology of posets, given by the so-called synor complex. We also introduce an Eilenberg--Zilber-type shuffle product on the simplicial chain complex of lattices.
Combining these concepts we prove that the existence of a nonzero homology class for a lattice forces certain nonzero homology classes in lower intervals. This result then translates into properties of the minimal free resolution. In particular, it yields a strengthening of the original subadditivity statement.
	\end{abstract}
\maketitle

\section{Introduction}
\label{sec:intro} 

  Let $\mathfrak{I}$ be a homogeneous ideal in a 
  standard graded $\KK$-algebra $S$ for some field $\KK$ and $\beta_{ij}(S/\mathfrak{I}) := \dim_\KK \mathrm{Tor}_i^S(S/\mathfrak{I},\KK)_j$ the \defn{graded Betti numbers} of $S/\mathfrak{I}$.
   The maximal shift
   $t_i(S/\mathfrak{I})$ in the $i$th homological degree is the maximal 
   $j$ such that
   $\beta_{ij}(S/\mathfrak{I}) \neq 0$. If $\beta_{ij}(S/\mathfrak{I}) = 0$ for all $j$ we set $t_i(S/\mathfrak{I}) = 0$.
   The following question has been considered (see \cite{Avramov, EHU}):
   

   \medskip

   \begin{Question}
     \label{sub}
     For which classes of homogeneous ideals $\mathfrak{I}$
     does 
     $$t_{i_1+i_2}(S/\mathfrak{I}) \leq t_{i_1}(S/\mathfrak{I}) + t_{i_2}
     (S/\mathfrak{I})$$
     hold for all $0 \leq i_1, i_2$.
   \end{Question}


   \medskip

   Since $t_0(S/\mathfrak{I}) = 0$ the question is interesting only for 
   $i_1,i_2 \geq 1$.
   It has a negative answer for general homogeneous ideals
   even when $S$ is a polynomial ring, and counterexamples can be found in \cite{EHU} and \cite{Avramov}.
   
   For the rest of this paper $S = \KK[x_1,\ldots, x_n]$ and 
   $\mathfrak{I}$ is a monomial ideal. Indeed, as a
   special case of 
   \cite[Conjecture 6.4]{Avramov} it was conjectured that
   \ref{sub} holds for quadratic monomial ideals. In this case,
   their equation \cite[(6.4)]{Avramov} implies that \eqref{sub}
   holds if $\mathrm{char}(\KK) = 0$ and $1$ is added on the right-hand side.
   Over the last 10 years numerous results were obtained providing
   positive evidence that \ref{sub} has a positive answer for all monomial ideals.
   In \cite{HerzogSrinivasan} the case $i_1 = 1$ was settled, with an extension appearing in \cite{AA}. Positive results for 
   edge ideals and other graph-motivated situations can be found in \cite{AbNev, BH, JayKumar} and facet ideals of simplicial forests were treated in \cite{faridi}. The case when the monomial ideal has a DGA resolution was settled in \cite{katthaen}.
   

   For a monomial ideal $\mathfrak{I}$ with minimal monomial generating set $G(\mathfrak{I})$ the \defn{LCM-lattice} $\LLL(\mathfrak{I})$ is the
   set of all least common multiples $\displaystyle{\lcm_{m \in B}} m$ for $B \subseteq G(\mathfrak{I})$ where we set $\displaystyle{\lcm_{m \in \emptyset}} m = 1$.
   Ordered by divisibility the set $\LLL(\mathfrak{I})$ becomes an atomic lattice. 
   
   Our approach to answering \ref{sub} leads to the study of LCM-lattices and their topology. 
   For the latter, when referring to the topology or homology of a poset $\PPP$, we mean the topology or homology of its order complex; that is the simplicial complex of totally ordered subsets of $\PPP$.

   For $p \leq q$ in $\PPP$ we denote by $[p,q]$ the \defn{closed interval} 
   $\{ x \in \PPP ~|~p \leq x \leq q\}$. \defn{Open} and
   \defn{half-open intervals} in $\PPP$ are defined analogously. 

   Our work on \ref{sub} is based on the known relation (see \cite{GPW}) between graded Betti numbers $\beta_{ij}(S/\mathfrak{I})$ and homology groups of open intervals supported in $\LLL=\LLL(\mathfrak{I})$,
   given by

   \begin{align} \label{eq:lcm}
   \beta_{ij}(S/\mathfrak{I}) & = \sum_{\genfrac{}{}{0pt}{}{m \in \LLL(\mathfrak{I})}{\deg(m) = j}} \dim_\KK \redH_{i-2} \big( \,(1,m)\,\big),
   \end{align}
   
   where the homology is reduced and taken with $\KK$-coefficients. We adopt this convention for the rest of the paper.
   
   Thus if $1 \leq i_1,i_2$ and $t_{i_1+i_2} > 0$ then $\redH_{i_1+i_2-2} \big(\,(1,m)\,\big) \neq 0$
   for some monomial $m$ of degree $t_{i_1+i_2}$.
   This shows that \ref{sub} has a positive answer when in this situation
   there are monomials $n_1$ and $n_2$ in our lattice such that $\redH_{i_1-2} \big(\,(1,n_{1})\,\big) \neq 0$ and $\redH_{i_2-2} \big(\,(1,n_{2})\,\big) \neq 0$
   and $t_{i_1+i_2}$ is bounded from above by the degree of the lcm of $n_1$ and $n_2$. Hence, the question for monomial ideals is implied by the following theorem. In its formulation we write $\hat{0}$ for the unique
   minimal element of $\LLL$ and write $n_1 \vee n_2$ for
   the supremum of two elements of $\LLL$. We prove:
   
   \begin{Theorem}
   \label{mainstart}
      Let $1 \leq i_1,i_2$ and let $\LLL$ be a finite lattice with an element $m$ such that $$\redH_{i_1+i_2-2}\big(\,(\hat{0},m)\,\big) \neq 0.$$ Then there exist elements
      $n_1,n_2 \in \LLL$ such that $$\redH_{i_1-2} \big(\,(\hat{0},n_{1})\,\big) \neq 0, \ \redH_{i_2-2} \big(\,(\hat{0},n_{2})\,\big) \neq 0$$ 
      and $n_1\vee n_2 = m$.
   \end{Theorem}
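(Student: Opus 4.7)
The plan is to argue by contrapositive: assume that for every pair $n_1, n_2 \in \LLL$ with $n_1 \vee n_2 \ge m$ we have either $\redH_{i_1-2}((\hat{0}, n_1)) = 0$ or $\redH_{i_2-2}((\hat{0}, n_2)) = 0$, and derive that $\redH_{i_1+i_2-2}((\hat{0}, m)) = 0$. The strategy is to build every cycle in the middle dimension $i_1+i_2-2$ of the order complex of $(\hat{0}, m)$ as a combination of shuffle products of lower-dimensional chains living in intervals $(\hat{0}, n_1)$ and $(\hat{0}, n_2)$ with $n_1 \vee n_2 \ge m$; if all such factor intervals are acyclic in their respective dimensions, the assembled cycles must bound.

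The first task is to set up an Eilenberg--Zilber style shuffle map. Given strict chains $\alpha\colon a_1 < \cdots < a_p$ in $(\hat{0}, n_1)$ and $\beta\colon b_1 < \cdots < b_q$ in $(\hat{0}, n_2)$, I would associate to each lattice path of length $p+q$ through the $p \times q$ grid a chain in $(\hat{0}, n_1 \vee n_2)$ obtained by taking joins $a_i \vee b_j$ along the path, then sum over all shuffles with the usual Koszul signs. The goal is to verify, as in the classical Eilenberg--Zilber theorem, that this operation is a chain map and hence induces a product on the reduced homology of intervals, of the form
\begin{equation*}
   \shuffle \colon \redH_{i_1-2}((\hat{0}, n_1)) \otimes \redH_{i_2-2}((\hat{0}, n_2)) \longrightarrow \redH_{\,\ast}((\hat{0}, n_1 \vee n_2)),
\end{equation*}
landing in the expected total degree. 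Checking the Leibniz rule for the boundary under join-shuffling is where I would expect signs and degeneracies to require care, especially when repeated joins collapse (so the shuffle product would presumably be defined on a normalized/non-degenerate quotient).

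The second ingredient is the synor complex, which I would take to be a chain model for the order complex of $(\hat{0}, m)$ whose generators are indexed by \emph{pairs} of chains — one in some $(\hat{0}, n_1)$ and one in some $(\hat{0}, n_2)$, with $n_1 \vee n_2 \ge m$ — glued together precisely via the shuffle operation above. The key lemma to prove (by a standard acyclic-carrier/collapse argument on the double complex indexed by decompositions $v = n_1 \vee n_2$) is that the synor complex is quasi-isomorphic to $\redC_{\ast-1}((\hat{0}, m))$. Once this is established, every element of $\redH_{i_1+i_2-2}((\hat{0}, m))$ is represented by a synor cycle whose constituent factors are chains in $(\hat{0}, n_1)$, $(\hat{0}, n_2)$ with $n_1 \vee n_2 \ge m$, of reduced degrees $i_1 - 2$ and $i_2 - 2$ respectively.

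Assuming this machinery is in place, the conclusion is immediate: under the contrapositive hypothesis each factor chain is a boundary in its interval, so by the Leibniz rule the entire shuffle representative is a boundary in the synor complex, and therefore in $\redC_{\ast-1}((\hat{0}, m))$, contradicting the assumption $\redH_{i_1+i_2-2}((\hat{0}, m)) \ne 0$. The main obstacle, I expect, is not the formal reduction but the construction and verification of the synor complex: proving that this bifiltered model genuinely computes the order-complex homology of $(\hat{0}, m)$ and that the shuffle map is a compatible chain-level operation on it, particularly in handling the degenerate shuffles where $a_i \vee b_j = a_{i+1} \vee b_j$ or where some join exceeds $m$, which requires a careful choice of normalization.
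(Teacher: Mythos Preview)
Your high-level picture is right in spirit --- the paper does build an Eilenberg--Zilber shuffle product on lattice chains and combines it with a ``synor'' construction --- but your description of the synor complex and the mechanism by which cycles are represented as shuffles is not what happens, and your proposed route has a real gap.

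First, the synor complex in the paper is \emph{not} a bicomplex of pairs of chains glued by shuffle products. It is a strictly $\PPP$-graded subcomplex $\Sy_*(\PPP) \hookrightarrow \redC_*(\PPP)$ with $\dim_\KK \Sy_i(\PPP)^{(x)} = \dim_\KK \redH_{i-1}(\PPP_{<x})$, built inductively element-by-element so that restriction to any order ideal computes the correct reduced homology. The shuffle product plays no role in its definition. Its purpose is to provide canonical chain-level representatives of homology classes whose supporting chains are forced to pass through synors (elements with nontrivial lower-interval homology).

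Second, and this is the actual gap: your ``key lemma'' --- that some bifiltered shuffle model is quasi-isomorphic to $\redC_*((\hat 0,m))$ via an acyclic-carrier argument --- is exactly the hard content, and the paper does \emph{not} obtain it by any such collapse. What the paper proves instead (Proposition~5.2) is that a principal synor chain $\gamma = \hat 1 * \zeta \in \Sy_m(\overline{\LLL})^{(\hat 1)}$, once written in its $\ell$-representation $\gamma = \sum_{\chi} \chi * \zeta_\chi$, is homologous in $\redH_m(\overline{\LLL},\check{\LLL})$ to $\sum_\chi \rho_\ell(\chi) \shuffle \zeta_\chi$, where $\rho$ is a chain map into the synor complex. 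This is established via a delicate cancellation argument (Lemmas~4.1 and~4.2: the coefficient sums $[\zeta:\ccf]_j$ vanish for synor cycles, forcing $\sum_{\chi' \in [\chi]} \zeta_{\chi'} = 0$ over $\sim_j$-classes), not by an abstract quasi-isomorphism. Once this homologous equation is in hand, nontriviality of $\gamma$ forces some summand $\rho_\ell(\chi) \shuffle \zeta_\chi$ to hit $\hat 1$, and since both factors are synor chains, one reads off the required synors $n_1, n_2$ directly.

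Finally, a smaller point: your contrapositive hypothesis gives that for each pair $(n_1,n_2)$ \emph{at least one} factor homology vanishes, not that both do. Your sentence ``each factor chain is a boundary'' overstates this; one would need to argue term-by-term which factor bounds, and without control over the specific shuffle decomposition (which is what the synor machinery provides) this does not obviously assemble into a global boundary.
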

Let us note that this statement is immediate for several classes of lattices, such as face lattices of strongly regular CW complexes. 

As a consequence of \ref{mainstart} and the arguments preceding it we obtain a positive answer to \ref{sub} for monomial ideals.

\begin{Theorem} \label{thm:subad}
   Let $\mathfrak{I}$ be a monomial ideal in $S$. 
   Then for all $0 \leq i_1,i_2$ we have
     $$t_{i_1+i_2}(S/\mathfrak{I}) \leq t_{i_1}(S/\mathfrak{I}) + t_{i_2}
     (S/\mathfrak{I}).$$
\end{Theorem}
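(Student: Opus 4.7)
The plan is to obtain \ref{thm:subad} as a direct corollary of \ref{mainstart} together with the LCM-lattice formula \eqref{eq:lcm} for graded Betti numbers. First I would dispose of the trivial cases. If either $i_1$ or $i_2$ equals zero, then $t_0(S/\mathfrak{I}) = 0$ and the desired inequality collapses to a tautology. If $t_{i_1+i_2}(S/\mathfrak{I}) = 0$, the inequality is also vacuous. So I may assume $i_1, i_2 \geq 1$ and set $j := t_{i_1+i_2}(S/\mathfrak{I}) > 0$.

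Next, I would combine the definition of $t_{i_1+i_2}$ with \eqref{eq:lcm}. Since $\beta_{i_1+i_2,j}(S/\mathfrak{I}) \neq 0$, the right-hand sum in \eqref{eq:lcm} has a nonzero term, producing a monomial $m \in \LLL(\mathfrak{I})$ of degree $j$ such that $\redH_{i_1+i_2-2}\big(\,(\hat{0},m)\,\big) \neq 0$, where $\hat{0}=1$ is the bottom of the LCM-lattice. This places us exactly in the hypothesis of \ref{mainstart} applied to $\LLL := \LLL(\mathfrak{I})$, which returns elements $n_1, n_2 \in \LLL(\mathfrak{I})$ with $\redH_{i_1-2}\big(\,(\hat{0},n_1)\,\big)\neq 0$, $\redH_{i_2-2}\big(\,(\hat{0},n_2)\,\big)\neq 0$, and $n_1 \vee n_2 \geq m$.

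From here the bookkeeping is routine. Reading \eqref{eq:lcm} in the reverse direction, the first two homology conditions force $\beta_{i_1,\deg(n_1)}(S/\mathfrak{I}) > 0$ and $\beta_{i_2,\deg(n_2)}(S/\mathfrak{I}) > 0$, whence $t_{i_1}(S/\mathfrak{I}) \geq \deg(n_1)$ and $t_{i_2}(S/\mathfrak{I}) \geq \deg(n_2)$. Because joins in $\LLL(\mathfrak{I})$ are given by pointwise maxima of exponent vectors, the relation $n_1 \vee n_2 \geq m$ is just the divisibility $m \mid \lcm(n_1,n_2)$, and hence $\deg(m) \leq \deg(\lcm(n_1,n_2)) \leq \deg(n_1) + \deg(n_2)$. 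Combining the three estimates yields $t_{i_1+i_2}(S/\mathfrak{I}) = \deg(m) \leq t_{i_1}(S/\mathfrak{I}) + t_{i_2}(S/\mathfrak{I})$, as desired.

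The real work sits in \ref{mainstart}, not in this reduction; the present argument is essentially a dictionary translation between graded Betti numbers of monomial ideals and reduced homology of open intervals of LCM-lattices. The only thing that could conceivably go wrong is if the join $n_1\vee n_2$ computed inside $\LLL(\mathfrak{I})$ failed to coincide with the actual lcm in $S$, but since $\LLL(\mathfrak{I})$ is by construction closed under lcms of subsets of generators, this is automatic. Consequently I expect no genuine obstacle at this stage: the main difficulty of the paper has been pushed into the lattice-theoretic statement \ref{mainstart}.
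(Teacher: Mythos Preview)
Your proposal is correct and matches the paper's approach almost verbatim: the reduction you describe is exactly the argument sketched in the introduction immediately before and after the statement of \ref{mainstart}. Formally the paper derives \ref{thm:subad} as the special case $k=0$ of the more general \ref{thm:ksubad}(i), but the proof of that result is precisely the translation via \eqref{eq:lcm} and \ref{mainthm}/\ref{mainstart} that you wrote out, so there is no substantive difference.
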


For the proof of \ref{mainstart} we proceed as follows.
In \ref{sec:lemmas} we introduce a shuffle product on the
chain complex of the order complex of a finite lattice.  
Then in \ref{sec:synor} we establish the existence of a
subcomplex of the chain complex of a poset - a so-called synor complex - 
which encodes the homology of all lower intervals. The synor complex is
graded by the synors of the poset, which are the elements for which the
subposet of elements below is not acyclic. 
Afterwards in \ref{sec:shuffling} we leverage the 
shuffle product and
the synor complex to prove in \ref{lmuff} a result on the representation of cycles as sums of shuffles of synor chains. As a consequence we can prove 
\ref{mainthm} which contains \ref{mainstart} as a special case.
In \ref{sec:further} we apply \ref{mainthm} to 
prove in \ref{thm:ksubad} a generalization of \ref{thm:subad} as well as a bound
on the number of distinct multigraded shifts in the minimal free resolution
of a monomial ideal. We also show in \ref{thm:synorres} 
that the synor complex
from \ref{sec:synor} can be used to construct a minimal free
resolution of a monomial ideal.

\subsection*{Acknowledgements:} We thank Aldo Conca, David Eisenbud, Craig Hu\-neke and Eran Nevo for their interest and helpful comments. This paper was written while the first author was a member at the Institute for Advanced Study in Princeton. 
Funding for his membership has been provided by The Ambrose Monell Foundation and Horizon Europe ERC Grant number: 101045750 / Project acronym: HodgeGeoComb. The third and fourth authors would like to thank the Einstein Institute of Mathematics at the Hebrew University of Jerusalem for the hospitality.
   
\section{Posets, homology and the shuffle product in lattices}
         \label{sec:lemmas}

We first introduce the concepts for posets and lattices we use in this 
paper.

For a poset $\PPP$ we call a multichain $\ccf = (c_0 \geq c_{1} \geq \cdots \geq c_k)$ of elements
of $\PPP$ an \defn{order multichain}. We call $k$ the \defn{length} of the
multichain. Order multichains of length $k$ are simply referred to as \defn{$k$-multichains}. We call an order multichain with 
no repetitions an \defn{order chain}.  
Note that the empty multichain $\ccf =()$ is the
unique order multichain of length $-1$.
The \defn{order complex} of $\PPP$ is the simplicial complex of all
order chains in $\PPP$. 
From now on 

\smallskip

\centerline{all posets and lattices are assumed to be finite.}

\smallskip

We adopt the notions of open, half-open and closed intervals in posets,
as defined in \ref{sec:intro}. We call a subset $\mathcal{J} \subseteq \PPP$
an \defn{order ideal} in $\PPP$ if for $x,y \in \PPP$, $x \leq y$ and $y \in \JJJ$ imply $x \in \JJJ$. For $x \in \PPP$ we write
$\PPP_{\leq x}$ for the order ideal of all $y \leq x$ and 
$\PPP_{< x}$ for the order ideal $\PPP_{\leq x} \setminus \{x\}$.

Next we recall some basics about the homology of order complexes of a poset
$\PPP$.
We fix a field $\KK$ and for $k\geq -1$, we denote by $\redM_{k}(\PPP)$ the $\KK$-vector space freely 
generated by the order multichains of length $k$. We write $\redC_k(\PPP)$ for the subspace spanned by the order chains of length $k$. 
For an order multichain $\ccf = (c_0 \geq c_{1} \geq \cdots \geq c_k)$
of length $k$ and $0 \leq j \leq k$ we set $$\partial_k^ {(j)}(\ccf)
= (c_0 \geq \cdots \geq c_{j-1} \geq c_{j+1} \geq \cdots \geq c_k)$$
and write $$\partial_k = \sum_{j=0}^ k  (-1)^j \partial_k^{(j)}$$ for the induced
linear map $\partial_k : \redM_k(\PPP) \rightarrow \redM_{k-1}(\PPP)$. 
Thus $(\redM_*,\partial_*)$ is the standard simplicial chain complex 
of the simplicial set associated to the order complex of $\PPP$, i.e., the simplicial complex of multichains in $\PPP$. 
The complex 
$(\redC_*,\partial_*)$ is the standard simplicial chain complex associated to the order complex of $\PPP$. The order complex of any subposet $\mathcal{Q} \subseteq \PPP$ is naturally a subcomplex of the order complex of $\PPP$. This gives a simplicial pair $(\PPP, \mathcal{Q})$, for which $(\redC_* (\PPP, \mathcal{Q}), \partial_*)$ denotes the relative simplicial chain complex and $\redH_* (\PPP, \mathcal{Q})$ the homology of $(\redC_* (\PPP, \mathcal{Q}), \partial_*)$. 

By \cite{EM} we have that 
$(\redC(\PPP),\partial_*)$ is the normalization of $(\redM_*(\PPP),\partial_*)$ 
and in particular by the proof of \cite[Theorem 4.1]{EM} we have:

\begin{Lemma} \label{lem:norm}
  The projection map $\pi:\redM_*(\PPP) \rightarrow \redC_*(\PPP)$ which acts identically on order chains and vanishes on multichains that are not order chains, induces a homotopy inverse to the
  inclusion $(\redC_*(\PPP),\partial_*) \hookrightarrow (\redM_*(\PPP),\partial_*)$.
\end {Lemma}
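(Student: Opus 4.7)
The plan is to reduce the statement to the classical Eilenberg--MacLane normalization theorem. The key observation is that the order multichains of $\PPP$ are the simplices of a simplicial set $X(\PPP)$ whose $k$-simplices are the $k$-multichains, with face map $d_j$ sending $(c_0 \geq \cdots \geq c_k)$ to $(c_0 \geq \cdots \geq c_{j-1} \geq c_{j+1} \geq \cdots \geq c_k)$ and degeneracy $s_j$ repeating the $j$-th entry. The simplicial identities hold for purely formal reasons, since the multichain condition permits consecutive repetitions. Under this identification $(\redM_*(\PPP),\partial_*)$ is precisely the unnormalized $\KK$-linear chain complex of $X(\PPP)$.

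Next I would identify the degenerate subcomplex of $\redM_*(\PPP)$: a multichain $\ccf = (c_0 \geq \cdots \geq c_k)$ lies in the image of some degeneracy $s_j$ if and only if there is an index $j$ with $c_j = c_{j+1}$, in which case $\ccf = s_j(c_0 \geq \cdots \geq c_j \geq c_{j+2} \geq \cdots \geq c_k)$. Hence the span of degenerate simplices is exactly the complement of $\redC_*(\PPP)$ in $\redM_*(\PPP)$, so the projection $\pi$ of the statement is the canonical quotient map along the degenerate part. Because of the simplicial identities $d_j s_j = d_{j+1} s_j = \mathrm{id}$, the non-order-chain terms in the boundary of a degenerate chain cancel in pairs, so the degenerate subcomplex $D_*$ is genuinely a subcomplex, and it equals $\ker \pi$.

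At this point the Eilenberg--MacLane normalization theorem \cite[Theorem 4.1]{EM} applies directly. Its proof constructs an explicit contracting chain homotopy $h$ satisfying $\iota\pi - \mathrm{id} = \partial h + h\partial$ on $\redM_*(\PPP)$, while $\pi \circ \iota = \mathrm{id}$ on the nose because an order chain has no adjacent repetitions. Together these exhibit $\pi$ as a homotopy inverse of the inclusion $\iota$, which is the content of the lemma.

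The only item requiring real verification is the dictionary between the poset-theoretic multichain language used here and the simplicial-set language of \cite{EM}; once that is set up, the conclusion is immediate. There is no substantive obstacle: the statement is essentially the poset-theoretic incarnation of the normalization/Dold--Kan theorem, which is why it suffices to refer to the proof of \cite[Theorem 4.1]{EM} rather than reproduce it.
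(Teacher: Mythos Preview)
Your proposal is correct and takes essentially the same approach as the paper: the paper does not give an independent proof but simply observes that $(\redC_*(\PPP),\partial_*)$ is the normalization of $(\redM_*(\PPP),\partial_*)$ and cites the proof of \cite[Theorem~4.1]{EM}. Your write-up is an explicit unpacking of precisely that citation, spelling out the simplicial-set structure on multichains and the identification of the degenerate subcomplex with the span of non-order-chain multichains.
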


Using this lemma we will work with $\redC_*(\PPP)$ in the following
sections \ref{sec:shuffling} and \ref{sec:further} and ignore multichains which appear in any of the calculations.
For this section, \ref{sec:lemmas}, we mostly work in $(\redM_*(\PPP),\partial_*)$ since this
facilitates the manipulation of the combinatorial product on
$(\redM_*(\PPP),\partial_*)$ in case $\PPP$ is a lattice, 
which we define below.

Before we can define this product, we set up notation which will be used in the later
section for all posets.
We will consider an $i$-multichain $\ccf$ in $\PPP$ as an element
of the $(i+1)$-fold Cartesian product $\PPP^{i+1}$. For an arbitrary $(i+1)$-tuple $\ccf$ in $\PPP^ {i+1}$ and an arbitrary $(j+1)$-tuple $\ccf' \in \PPP^{j+1}$
we write $\ccf \cdot \ccf'$ for the $(i+j+2)$-tuple in $\PPP^{i+j+2}$ 
that is the concatenation of $\ccf$ and $\ccf'$.
In case $\ccf \cdot \ccf'$ is an order multichain, i.e., both $\ccf$ and $\ccf'$ are order multichains and the minimal element of
$\ccf$ is greater than or equal to the maximal element of $\ccf'$, 
then we write $\ccf * \ccf'$ for this order multichain. 
We write 
$\min(\ccf)$ for the minimal element of an order multichain $\ccf$.
If $\ccf$ is an $i$-multichain, $x = \min(\ccf)$ and
$\gamma = \sum_{\ell=1}^k \lambda_\ell \, \ccf_\ell \in \redM_j(\PPP)$ where
$\ccf_\ell$ are order multichains supported in $\PPP_{\leq x}$ then
we write $\ccf * \gamma$ for the chain 
$\sum_{\ell=1}^k \lambda_\ell \, \ccf*\ccf_\ell$ in $\redM_{i+j+1}(\PPP)$.


\medskip

Let $\LLL$ be a lattice.
Since by our assumption $\LLL$ is finite, it
has a unique minimal element which we denote by $\hat{0}$ and
a unique maximal element which we denote by $\hat{1}$. 
Note that if $\LLL$ is the lcm-lattice $\lcm(\mathfrak{I})$ of a monomial
ideal $\mathfrak{I}$ then $\hat{0} =1$, where $1$ stands for the monomial
$x_1^0 \cdots x_n^ 0$. For $x,y \in \LLL$ we write $x \vee y$ for the
\defn{supremum} or \defn{join} of $x$ and $y$, and $x \wedge y$ for the 
\defn{infimum} or \defn{meet} of $x$ and $y$. In $\lcm(\mathfrak{I})$ the
join of two elements is simply their lcm. 

We define $\tau : \LLL^{k+1} \rightarrow \widetilde{M}_k(\LLL)$ as 
the operator which takes the $(k+1)$-tuple 
$(a_0 ,\ldots, a_k) \in \LLL^{k+1}$ to the $k$-multichain
$(a_0\vee \cdots \vee a_{k} \geq a_1 \vee \cdots \vee a_{k} \geq \cdots \geq a_k)$. 

For a permutation $\sigma\in S_{k+1}$ of $\{0,\ldots, k\}$, a set $X$ and a $(k+1)$-tuple
$a = (a_0 ,\ldots, a_k) \in X^{k+1}$ we write $a_\sigma$ or $(a_0 ,\ldots, a_k)_{\sigma}$ for the $(k+1)$-tuple
$(a_{\sigma(0)},\ldots, a_{\sigma(k)})$. We will usually take $X$ to be a lattice $\LLL$ or the set of natural numbers.  

We say a permutation $\sigma\in S_{i+j+2}$ viewed as a bijection $\sigma:\{0,1,\ldots, i+j+1\}\rightarrow \{0,1,\ldots, i+j+1\}$ is
an \defn{$(i,j)$-shuffle} if $\sigma^{-1}$ is increasing on the sets $\{0,1,\ldots, i\}$ and $\{i+1,i+2,\ldots, i+j+1\}$.
We write $S_{i,j}$ for the set of all $(i,j)$-shuffles inside $S_{i+j+2}$. 

The shuffle operator 
$\shuffle : \widetilde{M}_i(\LLL) \times \widetilde{M}_j(\LLL) 
\rightarrow \widetilde{M}_{i+j+1}(\LLL)$ is then defined as the linear extension
of the map sending an $i$-multichain $\ccf$ and a $j$-multichain $\ccf'$ to

$$\ccf \shuffle \ccf' := \sum_{\sigma \in S_{i,j}} \sgn(\sigma) \,\tau\big( \,(\ccf \cdot \ccf')_\sigma\,\big).$$

Note that if $i=-1$ then $\ccf \shuffle \ccf' = \ccf'$. 
Consider the second tensor power 
$$(D_*(\LLL),\delta_*) = (\widetilde{M}_*,\partial_*) \otimes
(\widetilde{M}_*,\partial_*)$$ 
of the chain complex $(\widetilde{M}_*,\partial_*)$, 
with differential $\delta_n = \bigoplus_{i+j+1=n} \big(\,(\partial_i,\text{id})+ (-1)^{i+1} (\text{id},\partial_j)\,\big)$.

Since $\widetilde{M}_i (\LLL) \otimes \widetilde{M}_j(\LLL)$ has a basis 
consisting of the elementary tensors $\ccf \otimes \ccf'$ 
for $i$-multichains $\ccf$ and $j$-multichains $\ccf'$, it 
follows that $\ccf \otimes \ccf' \mapsto \ccf \shuffle \ccf'$
induces a map of $\mathbb{K}$-vector spaces  
$\shuffle : D_n(\LLL) \rightarrow \widetilde{M}_{n} (\LLL)$. In \ref{prop:shuffle} we prove that this map is a chain map. This will yield the very helpful boundary formula in \ref{lem:shuffle0}.
Before that, we prove two technical but straightforward combinatorial lemmas.\medskip 

Let $X^{i,j}=\{0,\ldots,i+j+1\}\times S_{i,j}$, and consider the partition  $X^{i,j}=\cup_{s=1}^{4}A^{i,j}_{s}$, where \begin{eqnarray*} 
     A^{i,j}_{1} & = & \Big\{\,(\ell,\sigma)\in X^{i,j}\,:\genfrac{}{}{0pt}{}{\ell\geq 1, \sigma(\ell-1)\in \{0,\ldots,i\} \ \text{and}}{ \sigma(\ell)\in \{i+1,\ldots,i+j+1\}}\,\Big\} \\
      A^{i,j}_{2} & = & \Big\{\,(\ell,\sigma)\in X^{i,j}\,:\genfrac{}{}{0pt}{}{\ell \geq 1, \sigma(\ell-1)\in \{i+1,\ldots,i+j+1\}\ \text{and}}  {\sigma(\ell)\in \{0,\ldots,i\}}\,\Big\} \\
      A^{i,j}_{3}& = & \big\{\,(\ell,\sigma)\in X^{i,j}\,:\, \sigma(\ell-1),\sigma(\ell)\in \{0,\ldots,i\}\,\big\} \\
      A^{i,j}_{4}& = & \big\{\,(\ell,\sigma)\in X^{i,j}\,:\, \sigma(\ell-1),\sigma(\ell)\in \{i+1,\ldots,i+j+1\}\,\big\} 
  \end{eqnarray*} where  we interpret the conditions $\sigma(-1)\in \{0,\ldots,i\}$ and $\sigma(-1)\in \{i+1,\ldots,i+j+1\}$ as true.
\begin{Lemma}\label{Tbij} Consider the map $T_{i,j}:A^{i,j}_{1}\rightarrow A^{i,j}_{2}$ defined by $$ T_{i,j}(\ell,\sigma)=(\ell,\sigma\circ \tau_{\ell}),$$ where $\tau_{\ell}\in S_{i+j+2}$ denotes the transposition $(\ell-1,\ell)$. Then, 
\begin{itemize}
    \item[(i)] $T_{i,j}$ is a bijection.
    \item[(ii)] For every $\sigma\in S_{i+j+2}$ we have $\sgn(\sigma)=-\sgn(\sigma\circ\tau_{\ell})$
    \item[(iii)] For every $\ccf\in\LLL^{i+1},\ccf'\in\LLL^{j+1}$ and $(\ell,\sigma)\in A^{i,j}_{1}$ we have $$\partial_{i+j+1}^{(\ell)}\left[\tau\big(\,(\ccf \cdot \ccf' )_{\sigma}\,\big)\right]=\partial_{i+j+1}^{(\ell)}\left[\tau\big(\,(\ccf \cdot \ccf' )_{\sigma\circ\tau_{\ell}}\,\big)\right]$$
\end{itemize}
\end{Lemma}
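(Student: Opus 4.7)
The plan is to dispatch the three parts in order; parts (i) and (ii) are essentially formal, and the heart of the lemma is the chain-level identity (iii), which reduces to commutativity of $\vee$ once $\tau$ is written out explicitly.

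For (i), I first check well-definedness. If $(\ell, \sigma) \in A^{i,j}_1$ then $(\sigma \circ \tau_\ell)(\ell-1) = \sigma(\ell) \in \{i+1,\ldots,i+j+1\}$ and $(\sigma \circ \tau_\ell)(\ell) = \sigma(\ell-1) \in \{0,\ldots,i\}$, which matches the value conditions defining $A^{i,j}_2$. It remains to verify that $\sigma \circ \tau_\ell$ is still an $(i,j)$-shuffle. Since $\tau_\ell$ swaps the two adjacent positions $\ell-1$ and $\ell$, and by hypothesis these lie in different blocks of the partition $\sigma^{-1}(\{0,\ldots,i\}) \sqcup \sigma^{-1}(\{i+1,\ldots,i+j+1\})$, the effect on each block is to replace either $\ell-1$ by $\ell$ or vice versa at the same rank in the sorted order of that block; monotonicity of $\sigma$ on each preimage is therefore inherited by $\sigma \circ \tau_\ell$. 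Bijectivity is then immediate: the symmetric formula $(\ell, \sigma) \mapsto (\ell, \sigma \circ \tau_\ell)$ defines a map $A^{i,j}_2 \to A^{i,j}_1$ by the same argument, and it is a two-sided inverse of $T_{i,j}$ since $\tau_\ell^2 = \id$. Part (ii) is immediate from multiplicativity of sign and $\sgn(\tau_\ell) = -1$.

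For (iii), the approach is to write both sides out explicitly and observe that the two multichains agree in every slot except the one being discarded. Set $m = i+j+1$ and $a_k = (\ccf \cdot \ccf')_{\sigma(k)}$, so that $\tau\bigl((\ccf \cdot \ccf')_\sigma\bigr)$ is the multichain whose $k$-th entry is $b_k = a_k \vee a_{k+1} \vee \cdots \vee a_m$. Passing from $\sigma$ to $\sigma \circ \tau_\ell$ swaps the entries in positions $\ell-1$ and $\ell$, producing new joins $b'_k$. A case check shows $b'_k = b_k$ for every $k \ne \ell$: for $k > \ell$ the tail of the tuple is unchanged; for $k < \ell-1$ both $a_{\ell-1}$ and $a_\ell$ appear in both joins; and for $k = \ell-1$ commutativity of $\vee$ gives $b'_{\ell-1} = a_\ell \vee a_{\ell-1} \vee b_{\ell+1} = b_{\ell-1}$. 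The sole discrepancy is at $k = \ell$, where $b'_\ell = a_{\ell-1} \vee b_{\ell+1}$ replaces $b_\ell = a_\ell \vee b_{\ell+1}$. Since $\partial^{(\ell)}_{i+j+1}$ deletes precisely the $\ell$-th entry, the two truncated multichains coincide. The main obstacle is the notational bookkeeping in (iii); the shuffle-property check in (i) is the only place where the combinatorics of $S_{i,j}$ enters nontrivially, and there the argument is local to the adjacent pair $\ell-1, \ell$.
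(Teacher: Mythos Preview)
Your proof is correct and follows essentially the same strategy as the paper: parts (i) and (ii) are dismissed as formal (the paper says they ``follow immediately from the definitions''), and for (iii) both arguments show that the two multichains $\tau\bigl((\ccf\cdot\ccf')_\sigma\bigr)$ and $\tau\bigl((\ccf\cdot\ccf')_{\sigma\circ\tau_\ell}\bigr)$ agree in every coordinate except the $\ell$-th, which $\partial^{(\ell)}_{i+j+1}$ deletes. Your treatment of (iii) is in fact slightly more direct than the paper's, since you work with the raw tail-joins $b_k=a_k\vee\cdots\vee a_m$ and invoke commutativity of $\vee$, whereas the paper first rewrites each entry via the auxiliary indices $\sigma_1(k),\sigma_2(k)$ (which implicitly uses that $\ccf,\ccf'$ are multichains) before observing that the tail sets $\{\sigma(k),\ldots,\sigma(m)\}$ coincide for $k\neq\ell$; the underlying idea is the same.
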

\begin{proof} The first two properties follow immediately from the definitions, so we only prove (iii). 

Let $\ccf  = (c_0 \geq \cdots \geq c_i)$ 
  and $\ccf' = (c_{i+1} \geq \cdots \geq c_{i+j+1})$ viewed as tuples. 
  For a $\sigma \in S_{i,j}$ we write $\sigma_1(k)$ for 
  $\min\{ \{0,\ldots, i\} \cap \{ \sigma(k),\ldots, \sigma(i+j+1) \}\}$
  and $\sigma_2(k)$ for 
  $\min\{ \{i+1,\ldots, i+j+1\} \cap \{ \sigma(k),\ldots, \sigma(i+j+1) \}\}$
  where we consider the minimum over the empty set as $-\infty$. 
  
  Setting $c_{-\infty} = c'_{-\infty} = \hat{0}$ we get
  that 
  $$\tau\big(\,(\ccf \cdot \ccf')_\sigma\,\big) = 
  (c_{\sigma_1(0)} \vee c'_{\sigma_2(0)}\geq \cdots \geq c_{\sigma_1(i+j+1)} \vee
   c'_{\sigma_2(i+j+1)}).$$ Assuming $(\ell,\sigma)\in A^{i,j}_{1}$, we note that the sets $\{\sigma(k),\ldots,\sigma(i+j+1)\}$ and $\{(\sigma\circ\tau_{\ell})(k),\ldots,(\sigma\circ\tau_{\ell})(i+j+1)\}$ coincide for all $k\neq \ell$. This is because $(\sigma\circ\tau_{\ell})(j)=\sigma(j)$ for $j\neq \ell-1,\ell$, while $(\sigma\circ\tau_{\ell})(\ell-1)=\sigma(\ell)$ and 
 $(\sigma\circ\tau_{\ell})(\ell)=\sigma(\ell-1)$. Consequently, $\sigma_{1}(k)=(\sigma\circ\tau_{\ell})_{1}(k)$ and $\sigma_{2}(k)=(\sigma\circ\tau_{\ell})_{2}(k)$ for all $k\neq \ell$.
  This means that $\tau\big(\,(\ccf \cdot \ccf')_{\sigma\circ\tau_{\ell}}\,\big)$ coincides with $\tau\big(\,(\ccf \cdot \ccf')_\sigma\,\big)$, except for the entry corresponding to $\ell$. Removing this entry implies the equality in (iii).
\end{proof}
 \begin{Lemma}\label{Omegas}The map $\Omega_{i,j}:A^{i,j}_{3}\rightarrow
 \{0,\ldots,i\}\times S_{i-1,j}$ defined by 
 \begin{equation}\label{Ome}\Omega_{i,j}(\ell,\sigma)=(m,\phi)\in \{0,\ldots,i\}\times S_{i-1,j} \ \text{\emph{iff\footnotemark[3]}} \ \phi\circ\partial_{i+j+1}^{(m)}=\partial_{i+j+1}^{(\ell)}\circ \sigma
 \end{equation} 
 \footnotetext[3]{Here, $\partial_{i+j+1}^{(k)}$ acts on an $(i+j+2)$-tuple by removing its $k$-th element from the left, for $k=0,\ldots,i+j+1$. Permutations act on tuples as usual.}
 has the following properties:
 \begin{itemize}
 \item[(i)] $\Omega_{i,j}$ is a bijection.
 \item[(ii)] For every $(\ell,\sigma)\in A^{i,j}_{3}$, we have that $(-1)^{\ell}\sgn(\sigma)=(-1)^{m}\sgn(\phi)$, where $(m,\phi)=\Omega_{i,j}(\ell,\sigma)$.
 \item[(iii)] For every $\ccf\in\LLL^{i+1},\ccf'\in\LLL^{j+1}$ and $(\ell,\sigma)\in A^{i,j}_{3}$ we have that $$\partial_{i+j+1}^{(\ell)}\left[\tau \big(\,(\ccf \cdot \ccf')_{\sigma}\,\big)\right]=\tau \big(\partial_{i+j+1}^{(\ell)}\left[(\ccf \cdot \ccf')_{\sigma}\right]\big)=\tau \big((\partial_{i+j+1}^{(m)}\left[\ccf \cdot \ccf'\right])_{\phi}\big)$$ where $(m,\phi)=\Omega_{i,j}(\ell,\sigma)$.
 
 \end{itemize}
 \end{Lemma}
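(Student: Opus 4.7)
My plan is to unpack the defining equation (\ref{Ome}), which forces $m = \sigma(\ell)$, and then verify each of the three claims by direct computation.

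For (i), I would first extract from (\ref{Ome}) an explicit description of $\phi$: with $m = \sigma(\ell)$, the permutation $\phi$ is determined on the shortened tuple by $\phi(k) = \sigma(k')$ with values exceeding $m$ shifted down by one, where $k' = k$ for $k < \ell$ and $k' = k+1$ for $k \geq \ell$. Since $\sigma \in S_{i,j}$ and $m \in \{0,\ldots,i\}$ (by definition of $A^{i,j}_{3}$), the shuffle property survives the excision and $\phi \in S_{i-1,j}$. For the inverse, I would argue that given $(m,\phi)$, the constraint $(\ell,\sigma) \in A^{i,j}_{3}$ pins $\ell$ uniquely: if the first-block positions of $\phi$ are $q_0 < \cdots < q_{i-1}$, then we must have $\ell = q_{m-1}+1$ (with the convention $q_{-1} := -1$ to cover the case $\ell = 0$), because $\sigma(\ell-1) \in \{0,\ldots,i\}$ is forced to carry the value $m-1$, which re-indexes to $q_{m-1}$ in $\phi$. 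Inserting $m$ at position $\ell$ and relabeling the values of $\phi$ at and above $m$ upward then reconstructs $\sigma$, giving a two-sided inverse.

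For (ii), I would count inversions. Deleting the entry $m$ at position $\ell$ from the one-line notation of $\sigma$ and relabeling yields $\phi$ without changing inversions among the remaining entries, so $\inv(\sigma) = \inv(\phi) + I$, where $I$ counts inversions involving position $\ell$. Setting $a = |\{k<\ell : \sigma(k) > m\}|$, $b = |\{k<\ell : \sigma(k) < m\}|$, $c = |\{k>\ell : \sigma(k) < m\}|$, the identities $a + b = \ell$ and $b + c = m$ give $I = a + c = \ell + m - 2b \equiv \ell + m \pmod 2$. Hence $\sgn(\sigma) = (-1)^{\ell+m}\sgn(\phi)$, which is equivalent to the claim.

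For (iii), the second equality is immediate from (\ref{Ome}) applied to the tuple $\ccf \cdot \ccf'$. For the first, it suffices to compare the $r$-th entries of $\partial_{i+j+1}^{(\ell)}[\tau((\ccf \cdot \ccf')_\sigma)]$ and $\tau[\partial_{i+j+1}^{(\ell)}((\ccf \cdot \ccf')_\sigma)]$ for $r < \ell$, since the entries for $r \geq \ell$ agree trivially. Writing $a_k = (\ccf \cdot \ccf')_{\sigma(k)}$, the former is $a_r \vee \cdots \vee a_{i+j+1}$, while the latter is the same join with $a_\ell$ omitted. These agree because $\sigma \in S_{i,j}$ preserves the order of $\{0,\ldots,i\}$, hence $\sigma(\ell-1) < \sigma(\ell)$ and therefore $a_{\ell-1} = c_{\sigma(\ell-1)} \geq c_{\sigma(\ell)} = a_\ell$, so $a_\ell$ is absorbed into the join. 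The main obstacle lies in (i), namely the bookkeeping that forces a unique choice of $\ell$ from $(m,\phi)$ through the constraint $A^{i,j}_{3}$; once this is settled, (ii) and (iii) reduce to the calculations sketched above.
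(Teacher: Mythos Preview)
Your proposal is correct and follows essentially the same route as the paper: both extract $m=\sigma(\ell)$ from the defining relation, determine $\phi$ as the induced permutation on the shortened tuple, recover the inverse by locating the unique $A^{i,j}_{3}$-compatible insertion point for $m$ (the paper phrases this as ``exactly after the largest element less than $m$'', which is your $\ell=q_{m-1}+1$), and in (iii) use the shuffle inequality $\sigma(\ell-1)<\sigma(\ell)$ to absorb $a_\ell$ into the join. The only cosmetic difference is in (ii), where the paper prepends $m=\sigma(\ell)$ to both sides of the defining equation and reads off the sign of the resulting permutation in two ways, rather than your direct inversion count $I=a+c=\ell+m-2b$.
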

 \begin{proof} 

  \noindent \item[(i)] 
  To prove that $\Omega_{i,j}$ is a well defined bijection, we first evaluate the condition in \ref{Ome} at the tuple $\{0,\ldots,i+j+1\}$. The equivalent condition is thus \begin{equation}\label{phi}(0,\ldots,\widehat{m},\ldots,i+j+1)_{\phi}=(\sigma(0),\ldots,\widehat{\sigma(\ell)},\ldots,\sigma(i+j+1)).\end{equation} Note that the left hand side describes the image of the permutation $\phi \in S_{i+j+1}$ acting on the (ordered) $(i+j+1)$-tuple $(0,\ldots,\hat{m},\ldots,i+j+1)$. This image is clearly not the same as $(\phi(0),\ldots,\widehat{\phi(m)},\ldots,\phi(i+j+1))$ due to the jump created by the absence of $m$.

  Given a pair $(\ell,\sigma)\in A^{i,j}_{3}$, we can immediately identify $m=\sigma(\ell)$ by comparing missing elements in the two sides of \ref{phi}. Then, we must have $m\in\{0,\ldots,i\}$ due to $(\ell,\sigma)\in A^{i,j}_{3}$. To determine $\phi\in S_{i+j+1}$, we identify its image $(0,\ldots,i+j)_{\phi}=(\phi(0),\phi(1),\ldots,\phi(i+j))$. By \ref{phi}, this image can be (uniquely) constructed from $(\sigma(0),\ldots,\widehat{\sigma(\ell)},\ldots,\sigma(i+j+1))$ if we decrease by $1$ all entries $\sigma(k)$ with $\sigma(k)>\sigma(\ell)$. Comparing the images of $\sigma$ and $\phi$ through this mechanism, it is clear that $\sigma\in S_{i,j}$ implies $\phi\in S_{i-1,j}$.

  Conversely, assume we are given a pair $(m,\phi)\in \{0,\ldots,i\}\times S_{i-1,j}$. In order to recover the unique pair $(\ell,\sigma)\in A^{i,j}_{3}$ satisfying \ref{phi}, we again begin by identifying missing elements, so that $\sigma(\ell)=m$. We then use the previous mechanism in reverse: Start with $(\phi(0),\phi(1),\ldots,\phi(i+j))$, which is known, and increase all $\phi(k)$ with $\phi(k)\geq m$ by $1$. This is precisely $(0,\ldots,\widehat{m},\ldots,i+j+1)_{\phi}$ which is $(\sigma(0),\ldots,\widehat{\sigma(\ell)},\ldots,\sigma(i+j+1))$ by \ref{phi}, provided we can solve for $(\ell,\sigma)\in A^{i,j}_{3}$. 
  
  We almost have the full image of $\sigma$ figured out - we only need to determine the value of $\ell$, because then we can place $m=\sigma(\ell)$ back in the entry indexed by $\ell$. This is where the condition $(\ell,\sigma)\in A^{i,j}_{3}$ comes into play: The fact that $\sigma(\ell-1),\sigma(\ell)\in \{0,\ldots,i\}$ along with the shuffle condition $\sigma \in S_{i,j}$ yield that $\sigma(\ell-1)=\sigma(\ell)-1=m-1$ in case $\ell \neq 0$. Looking back into \ref{phi}, this says that $\ell$  should be one larger than the position of $m-1$ in $(0,\ldots,\widehat{m},\ldots,i+j+1)_{\phi}$ provided that $m\neq 0$, and it should be equal to $0$ otherwise.
  It is then easy to check that the resulting pair $(\ell,\sigma)$ satisfies $(\ell,\sigma)\in A^{i,j}_{3}$ and $\Omega_{i,j}(\ell,\sigma)=(m,\phi)$.
     
     \noindent (ii)    
     Let $(\ell,\sigma)\in A^{i,j}_{3}$ and $\Omega_{i,j}(\ell,\sigma)=(m,\phi)\in \{0,\ldots,i\}\times S_{i-1,j}$. We look at the tuples in \ref{phi} and place the element $m=\sigma(\ell)$ in the leftmost position of both sides, thus creating (the image of) a permutation $\pi\in S_{i+j+2}$ written in two ways. The stated equality then follows if we compute the sign of $\pi$ using the two expressions: 
     
     For the left hand side we compute the number of inversions, which, compared to the number of inversions of $\phi$, has increased by $m$. Consequently, \begin{equation}\label{sgnphi}\sgn(\pi)=(-1)^{m}\sgn(\phi).\end{equation}

     For the right hand side, we note that we need $\ell$ transpositions involving $m$ in order to reach the permutation $\sigma$ from $\pi$. Thus, 
     \begin{equation}\label{sgnsigma}\sgn(\pi)=(-1)^{\ell}\sgn(\sigma).\end{equation}

      Comparing \ref{sgnphi} and \ref{sgnsigma}, we arrive at the desired equality.
     
     \noindent (iii)    We use the same notation as in the proof of \ref{Tbij}: For $(\ell,\sigma)\in A^{i,j}_{3}$, $\ccf\in \LLL^{i+1}$ and $\ccf'\in \LLL^{j+1}$ we write $$\tau\big(\,(\ccf \cdot \ccf')_\sigma\,\big) = 
  (c_{\sigma_1(0)} \vee c'_{\sigma_2(0)}\geq \cdots \geq c_{\sigma_1(i+j+1)} \vee
   c'_{\sigma_2(i+j+1)}).$$ We first observe that the element $\sigma(\ell)$ plays no role in the calculation of $\sigma_{2}(k)=\min\{ \{i+1,\ldots, i+j+1\} \cap \{ \sigma(k),\ldots, \sigma(i+j+1) \}\}$, since $(\ell,\sigma)\in A^{i,j}_{3}$ implies $\sigma(\ell)\in \{0,\ldots,i\}$. For $k>\ell$, the same clearly holds for $\sigma_{1}(k)$, since the sets $\{\sigma(k),\ldots,\sigma(i+j+1)\}$ do not contain $\sigma(\ell)$. 
   
   For $k<\ell$, the element $\sigma(\ell)$ is again irrelevant to the calculation of $\sigma_{1}(k)$:
   To see this, note that $(\ell,\sigma) \in A^{i,j}_{3}$ implies $\sigma(\ell-1),\sigma(\ell)\in \{0,\ldots,i\}$ and also $\sigma\in S_{i,j}$. Thus, $\sigma(\ell-1)<\sigma(\ell)$ by the shuffle condition, which means $\sigma_{1}(k)\leq\sigma(\ell-1)$. 
   
   All in all, the effect of $\sigma(\ell)$ appears only on the element of the multichain $\tau\big(\,(\ccf \cdot \ccf')_\sigma\,\big)$ corresponding to the index $\ell$. Hence,  $\tau\big(\,\partial_{i+j+1}^{(\ell)}\left[(\ccf \cdot \ccf')_\sigma\right]\,\big)$ is the same as $\tau\big(\,(\ccf \cdot \ccf')_\sigma\,\big)$ with the element corresponding to the index $\ell$ removed. This is precisely the first equality in (iii). The second equality follows directly from the definition of $\Omega_{i,j}$ in \ref{Ome}.
 \end{proof}
 We now go on to prove the main result of this section:
\begin{Proposition}
   \label{prop:shuffle}
   For $n \geq 0$ and $\alpha \in D_n(\LLL)$ we have
   $$\partial_{n} ( \shuffle ( \alpha)) = \shuffle (\delta_n(\alpha)).$$
\end{Proposition}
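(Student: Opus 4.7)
By bilinearity and the fact that $\widetilde{M}_i(\LLL) \otimes \widetilde{M}_j(\LLL)$ is spanned by elementary tensors $\ccf \otimes \ccf'$ with $\ccf$ an $i$-multichain and $\ccf'$ a $j$-multichain, it suffices to prove the identity for a single elementary tensor. So fix $\ccf \in \LLL^{i+1}$ and $\ccf' \in \LLL^{j+1}$ with $i+j+1=n$ (handling any degenerate $i=-1$ or $j=-1$ case separately, which is immediate from the convention $\ccf\shuffle\ccf' = \ccf'$ when $i=-1$). The goal becomes the identity
\[
\partial_{i+j+1}(\ccf \shuffle \ccf')
 \;=\; \partial_i(\ccf)\shuffle \ccf' \;+\; (-1)^{i+1}\, \ccf \shuffle \partial_j(\ccf').
\]

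The plan is to expand the left-hand side as the double sum
\[
\partial_{i+j+1}(\ccf \shuffle \ccf')
 \;=\; \sum_{(\ell,\sigma)\in X^{i,j}} (-1)^\ell \sgn(\sigma)\, \partial_{i+j+1}^{(\ell)}\!\bigl[\tau\bigl((\ccf\cdot\ccf')_\sigma\bigr)\bigr]
\]
and then decompose $X^{i,j}$ using the partition $X^{i,j} = A^{i,j}_1 \cup A^{i,j}_2 \cup A^{i,j}_3 \cup A^{i,j}_4$ introduced just above \ref{Tbij}. The four pieces will be handled separately: the contributions from $A^{i,j}_1$ and $A^{i,j}_2$ cancel against each other, while $A^{i,j}_3$ reproduces $\partial_i(\ccf)\shuffle\ccf'$ and $A^{i,j}_4$ reproduces $(-1)^{i+1}\ccf\shuffle\partial_j(\ccf')$.

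For the first part, I apply \ref{Tbij}: the bijection $T_{i,j}:A^{i,j}_1 \to A^{i,j}_2$ sends $(\ell,\sigma)$ to $(\ell,\sigma\circ\tau_\ell)$, part (ii) gives $\sgn(\sigma\circ\tau_\ell) = -\sgn(\sigma)$, and part (iii) says the value of $\partial_{i+j+1}^{(\ell)}\bigl[\tau\bigl((\ccf\cdot\ccf')_\sigma\bigr)\bigr]$ is unchanged under $T_{i,j}$. Since $\ell$ is also preserved, the factor $(-1)^\ell$ is unchanged, so the contributions over $A^{i,j}_1$ and $A^{i,j}_2$ cancel termwise. For the $A^{i,j}_3$ piece, I apply \ref{Omegas}: its bijection $\Omega_{i,j} : A^{i,j}_3 \to \{0,\ldots,i\}\times S_{i-1,j}$ together with parts (ii) and (iii) rewrites
\[
\sum_{(\ell,\sigma)\in A^{i,j}_3}(-1)^\ell\sgn(\sigma)\, \partial_{i+j+1}^{(\ell)}\!\bigl[\tau\bigl((\ccf\cdot\ccf')_\sigma\bigr)\bigr]
\;=\; \sum_{m=0}^{i}\sum_{\phi\in S_{i-1,j}} (-1)^m\sgn(\phi)\, \tau\bigl((\partial_i^{(m)}(\ccf)\cdot\ccf')_\phi\bigr),
\]
which is precisely $\partial_i(\ccf)\shuffle\ccf'$ when one collects the inner $\ell$-sum into $\partial_i(\ccf)=\sum_m(-1)^m\partial_i^{(m)}(\ccf)$ and the outer $\phi$-sum into the shuffle operator.

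The remaining step is the symmetric treatment of $A^{i,j}_4$. Here one repeats the proofs of \ref{Omegas} with the roles of the two blocks $\{0,\ldots,i\}$ and $\{i+1,\ldots,i+j+1\}$ reversed, producing a bijection $\Omega'_{i,j}:A^{i,j}_4 \to \{i+1,\ldots,i+j+1\}\times S_{i,j-1}$ and an analogous sign comparison. The only subtlety is the extra sign: for $(\ell,\sigma)\in A^{i,j}_4$ with $\sigma(\ell)=m\in\{i+1,\ldots,i+j+1\}$, the element $m$ must be moved past the $i+1$ symbols from the left block before one can read off the sign of the resulting permutation, which produces exactly the factor $(-1)^{i+1}$ and yields $(-1)^\ell\sgn(\sigma)=(-1)^{i+1}(-1)^{m-i-1}\sgn(\phi)$. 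Substituting into the sum and re-indexing by $m'=m-i-1$ recovers $(-1)^{i+1}\ccf\shuffle\partial_j(\ccf')$.

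The main obstacle is the careful bookkeeping of signs in the last step, both in the bijection $\Omega'_{i,j}$ (because the ``leftmost slot'' argument from \ref{Omegas}(ii) now requires moving the chosen element past the entire first block) and in the interaction between $\tau$ and $\partial^{(\ell)}_{i+j+1}$, where one must verify as in \ref{Omegas}(iii) that the shuffle condition forces the entry indexed by $\ell$ to be the only one affected by removing $\sigma(\ell)$; the other three pieces of the partition are essentially bookkeeping.
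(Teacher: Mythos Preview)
Your proposal is correct and follows essentially the same approach as the paper: reduce to an elementary tensor, expand $\partial_{i+j+1}(\ccf\shuffle\ccf')$ as a sum over $X^{i,j}$, split along the partition $A^{i,j}_1\cup A^{i,j}_2\cup A^{i,j}_3\cup A^{i,j}_4$, use \ref{Tbij} to cancel the first two pieces and \ref{Omegas} (and its mirror) to identify the last two. Your explicit discussion of the extra $(-1)^{i+1}$ sign in the $A^{i,j}_4$ case is a bit more detailed than the paper, which simply declares that step ``completely analogous'' to $s=3$.
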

\begin{proof}
  It suffices to check the identity for 
  $\alpha = \ccf \otimes \ccf'$ where $\ccf$ is an $i$-multichain,
  $\ccf'$ a $j$-multichain and $i+j+1=n$. 
  Let $\ccf  = (c_0 \geq \cdots \geq c_i)$ 
  and $\ccf' = (c_{i+1} \geq \cdots \geq c_{i+j+1})$.

  Then, 
  \begin{equation*} 
    \ccf \shuffle \ccf'  =  \sum_{\sigma \in S_{i,j}} \sgn(\sigma)  
       \,\tau\big(\, (c\cdot c')_\sigma\,\big)
  \end{equation*}
  and
  \begin{equation*} 
    \label{eq:a} \partial_{i+j+1}(\ccf \shuffle \ccf')  =  \sum_{(\ell,\sigma)\in X^{i,j}}(-1)^{\ell} \sgn(\sigma) 
  \,\partial_{i+j+1}^{(\ell)}\left[\tau\big(\,(\ccf\cdot\ccf')_{\sigma}\,\big)\right]
  \end{equation*} 
  To make the notation cleaner, we set $\partial_{i+j+1}^{(\ell)}\left[\tau\big(\,(\ccf\cdot\ccf')_{\sigma}\,\big)\right]=g_{\ell}(\ccf,\ccf',\sigma)$.
We may decompose this sum as
  \begin{equation}\label{decomp}\sum_{(\ell,\sigma)\in X^{i,j}}(-1)^{\ell} \sgn(\sigma) 
  g_{\ell}(\ccf,\ccf',\sigma)=\sum_{s=1}^{4}\sum_{(\ell,\sigma)\in A^{i,j}_{s}}(-1)^{\ell} \sgn(\sigma) 
  g_{\ell}(\ccf,\ccf',\sigma).
  \end{equation}

We now analyze the terms appearing on the right hand side of \ref{decomp}
for $s=1,2,3,4$:

  \noindent ($s=1,2$) The term corresponding to $A^{i,j}_{1}\cup A^{i,j}_{2}$ vanishes, meaning that $$\sum_{s=1}^{2}\sum_{(\ell,\sigma)\in A^{i,j}_{s}}(-1)^{\ell} \sgn(\sigma) 
  g_{\ell}(\ccf,\ccf',\sigma)=0.$$
  
  Indeed, by \ref{Tbij} we have $g_{\ell}(\ccf,\ccf',\sigma\circ\tau_{\ell})=g_{\ell}(\ccf,\ccf',\sigma)$ and $\sgn(\sigma)=-\sgn(\sigma\circ\tau_{\ell})$ for $\tau_{\ell}$ the transposition appearing in the definition of the bijection $T_{i,j}$. Thus, \begin{align*}&\sum_{s=1}^{2}\sum_{(\ell,\sigma)\in A^{i,j}_{s}} (-1)^{\ell}\sgn(\sigma)g_{\ell}(\ccf,\ccf',\sigma) =\\
      & = \sum_{(\ell,\sigma)\in A^{i,j}_{1}} (-1)^{\ell}\big(\sgn(\sigma) 
  g_{\ell}(\ccf,\ccf',\sigma)+\sgn(\sigma\circ \tau_{\ell}) 
  g_{\ell}(\ccf,\ccf',\sigma\circ\tau_{\ell})\big)\\ &=\sum_{(\ell,\sigma)\in A^{i,j}_{1}}(-1)^{\ell} \big( \sgn(\sigma) 
  g_{\ell}(\ccf,\ccf',\sigma)-\sgn(\sigma) 
  g_{\ell}(\ccf,\ccf',\sigma)\big)=0.\end{align*}

  \noindent ($s=3$) The term corresponding to $A^{i,j}_{3}$ coincides with $\partial_i(\ccf) \shuffle \ccf' = \shuffle(\partial_i(\ccf) \otimes \ccf')$. 

      By property (iii) of \ref{Omegas} we can write \begin{equation*}\sum_{(\ell,\sigma)\in A^{i,j}_{3}}(-1)^{\ell} \sgn(\sigma) 
  g_{\ell}(\ccf,\ccf',\sigma)=\sum_{\genfrac{}{}{0pt}{}{(\ell,\sigma) \in A^{i,j}_{3}}{(m,\phi)=\Omega(\ell,\sigma)}}(-1)^{\ell} \sgn(\sigma) 
  \tau \big((\partial_{i+j+1}^{(m)}[\ccf \cdot \ccf'])_{\phi}\big)\end{equation*} and then by properties (i), (ii) of the same lemma this equals
  \begin{align*}
      & \sum_{\genfrac{}{}{0pt}{}{(\ell,\sigma) \in A^{i,j}_{3}}{(m,\phi)=\Omega(\ell,\sigma)}}(-1)^{\ell} \sgn(\sigma) 
  \tau \big((\partial_{i}^{(m)}(\ccf) \cdot \ccf')_{\phi}\big)\\ = & \sum_{(m,\phi)\in \{0,\ldots,i\}\times S_{i-1,j}}(-1)^{m} \sgn(\phi) 
  \tau \big((\partial_{i}^{(m)}(\ccf) \cdot \ccf')_{\phi}\big).
  \end{align*}
   This last sum is of course $\partial_{i}(\ccf)\shuffle \ccf'$.

  \noindent ($s=4$) By an argument completely analogous to the case $s=3$, we get $$\sum_{(\ell,\sigma)\in A^{i,j}_{4}}(-1)^{\ell} \sgn(\sigma) 
  \,g_{\ell}(\ccf,\ccf',\sigma)=(-1)^{i+1} \ccf \shuffle\partial_{j}(\ccf').$$

  \smallskip
Summing all contributions in \ref{decomp}, we finally get $\partial_{n} ( \shuffle (\ccf \otimes \ccf' )) = \shuffle (\delta_n(\ccf \otimes \ccf')).$

\end{proof}

The following boundary formula spells out the exact signs implicit in 
\ref{prop:shuffle}.

\begin{Corollary}
  \label{lem:shuffle0}
  For $\ccf \in \widetilde{M}_{i}(\LLL)$ and $\ccf' \in \widetilde{M}_j(\LLL)$ 
  we have  
	$$\partial_{i+j+1}(\ccf \shuffle \ccf')=\partial_i (\ccf) \shuffle \ccf' + (-1)^{i+1} \, \ccf \shuffle \partial_j (\ccf' ).$$
\end{Corollary}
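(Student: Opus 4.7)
The plan is to derive this boundary formula as a direct unpacking of \ref{prop:shuffle} applied to the elementary tensor $\alpha = \ccf \otimes \ccf' \in D_{i+j+1}(\LLL)$. Everything needed is already contained in the definitions: the shuffle map satisfies $\shuffle(\ccf \otimes \ccf') = \ccf \shuffle \ccf'$ by construction, and the differential on the tensor product $(D_*(\LLL),\delta_*) = (\widetilde{M}_*,\partial_*) \otimes (\widetilde{M}_*,\partial_*)$ is specified on elementary tensors by
$$\delta_{i+j+1}(\ccf \otimes \ccf') = \partial_i(\ccf) \otimes \ccf' + (-1)^{i+1} \ccf \otimes \partial_j(\ccf'),$$
in accordance with the formula $\delta_n = \bigoplus_{i+j+1=n} \bigl((\partial_i,\text{id}) + (-1)^{i+1}(\text{id},\partial_j)\bigr)$ given just before the shuffle product was introduced.

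First, I would substitute $\alpha = \ccf \otimes \ccf'$ into the identity $\partial_n(\shuffle(\alpha)) = \shuffle(\delta_n(\alpha))$ of \ref{prop:shuffle}. The left-hand side immediately becomes $\partial_{i+j+1}(\ccf \shuffle \ccf')$. Next, I would apply $\shuffle$ to the two-term expression for $\delta_{i+j+1}(\ccf \otimes \ccf')$ above, using linearity of $\shuffle$ as a map of $\KK$-vector spaces, to obtain
$$\shuffle(\delta_{i+j+1}(\ccf \otimes \ccf')) = \shuffle(\partial_i(\ccf) \otimes \ccf') + (-1)^{i+1}\shuffle(\ccf \otimes \partial_j(\ccf')).$$
Finally, rewriting each summand using $\shuffle(\gamma \otimes \gamma') = \gamma \shuffle \gamma'$ yields exactly the claimed formula.

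There is no real obstacle here, since \ref{prop:shuffle} was already proved in the preceding pages. The only point deserving a brief remark is a sanity check on signs and degrees: $\partial_i(\ccf) \in \widetilde{M}_{i-1}(\LLL)$ and $\partial_j(\ccf') \in \widetilde{M}_{j-1}(\LLL)$, so both terms on the right live in $\widetilde{M}_{i+j}(\LLL)$, matching the degree of $\partial_{i+j+1}(\ccf \shuffle \ccf')$, and the sign $(-1)^{i+1}$ is precisely the Koszul sign dictated by the tensor product differential. Hence the corollary follows with no further computation.
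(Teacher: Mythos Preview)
Your argument is correct and is exactly the intended one: the paper does not give a separate proof of this corollary, presenting it as an immediate unpacking of \ref{prop:shuffle} on an elementary tensor, which is precisely what you do. There is nothing to add.
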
 

Let $\pi:\redM_*(\PPP) \rightarrow \redC_*(\PPP)$ be the map from 
\ref{lem:norm}. By \ref{lem:norm} the map $\pi$ induces a chain
homotopy equivalence and in particular it is a chain map. 
This implies the following corollary, which further justifies that we
ignore actual multichains in our calculations, i.e. we implicitly always apply 
$\pi$ to shuffles.

\begin{Corollary} 
\label{cor:shuffleorderchain}
For $\ccf \in \widetilde{C}_{i}(\LLL)$ and $\ccf' \in \widetilde{C}_j(\LLL)$ 
  we have 
$$\partial_{i+j+1}\big(\,\pi(\ccf \shuffle \ccf')\,\big)=\pi\big(\,\partial_i (\ccf) \shuffle \ccf'\,\big) + (-1)^{i+1} \, \pi\big(\,\ccf \shuffle \partial_j (\ccf' )\,\big).$$
\end{Corollary}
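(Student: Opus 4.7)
The plan is to reduce \ref{cor:shuffleorderchain} directly to \ref{lem:shuffle0} by applying the projection $\pi$, using that $\pi$ is a chain map. Since $\widetilde{C}_*(\LLL)$ sits inside $\widetilde{M}_*(\LLL)$ as a subcomplex (the boundary of an order chain is a signed sum of order chains, so $\partial_*$ preserves $\widetilde{C}_*$), we may regard $\ccf \in \widetilde{C}_i(\LLL)$ and $\ccf' \in \widetilde{C}_j(\LLL)$ as elements of $\widetilde{M}_i(\LLL)$ and $\widetilde{M}_j(\LLL)$, and $\ccf \shuffle \ccf'$ is then a well-defined element of $\widetilde{M}_{i+j+1}(\LLL)$ (it will in general contain genuine multichain terms, because applying $\tau$ to shuffled tuples creates repetitions at the top of the chain).

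First, I would invoke \ref{lem:shuffle0} to get the identity
\[
  \partial_{i+j+1}(\ccf \shuffle \ccf') \;=\; \partial_i(\ccf) \shuffle \ccf' \;+\; (-1)^{i+1}\, \ccf \shuffle \partial_j(\ccf')
\]
inside $\widetilde{M}_{i+j}(\LLL)$. Then I would apply the projection $\pi:\widetilde{M}_*(\LLL)\to\widetilde{C}_*(\LLL)$ from \ref{lem:norm} to both sides. By \ref{lem:norm}, $\pi$ is a chain homotopy inverse to the inclusion, and in particular a chain map, so it commutes with the boundary operator: $\pi\circ\partial_{i+j+1} = \partial_{i+j+1}\circ\pi$. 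Linearity of $\pi$ handles the right-hand side term by term.

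Combining these two observations yields
\[
  \partial_{i+j+1}\bigl(\pi(\ccf \shuffle \ccf')\bigr) \;=\; \pi\bigl(\partial_i(\ccf)\shuffle \ccf'\bigr) \;+\; (-1)^{i+1}\, \pi\bigl(\ccf \shuffle \partial_j(\ccf')\bigr),
\]
which is exactly the claim.

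There is essentially no obstacle here beyond bookkeeping: the whole content was already packaged in \ref{prop:shuffle} and \ref{lem:shuffle0}, and the role of \ref{lem:norm} is just to guarantee that $\pi$ commutes with $\partial$. The only minor point to check is that the differential on $\widetilde{M}_*$ restricted to $\widetilde{C}_*$ agrees with the differential on $\widetilde{C}_*$, so that writing $\partial_i(\ccf)$ and $\partial_j(\ccf')$ is unambiguous; this is immediate from the definition of $\partial$ by face removal.
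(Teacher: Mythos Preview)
Your argument is correct and is exactly the paper's approach: the paper simply notes that $\pi$ is a chain map by \ref{lem:norm} and deduces the corollary from \ref{lem:shuffle0} without further ado. Your write-up just makes the one-line deduction explicit.
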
 
\section{The synor complex} \label{sec:synor}

  For this section we return to the situation where $\PPP$ is a poset, and 
  we consider an efficient way to understand the homology of all lower
  intervals through a subcomplex of its simplicial chain complex. 
  We do so using the following notions.
  
  An \defn{$i$-synor} is an element $x$ in $\PPP$ such that $\PPP_{<x}$ has
  nontrivial $(i-1)$st (reduced) homology, and a \defn{synor} is an element 
  that is an $i$-synor for some $i$. It is not hard to see that 
  $\PPP_{\mathrm{synors}}$, the subposet of synors, and $\PPP$ are 
  homologically equivalent, that is, the inclusion induces an isomorphism 
  of homology groups. This is an easy consequence of 
  a homological version of Quillen's Theorem A (see e.g., \cite[Corollary 4.3]{BWW}). Intuitively, this assures us that the poset of synors $\PPP_{\mathrm{synors}}$ already captures the homological information of $\PPP$. 
 
   We proceed by capturing the same homological information through a chain complex associated with $\PPP$, called a
  \defn{synor complex} of $\PPP$ and denoted by $(\mathcal{S}_*(\PPP),\delta_{*})$, which gives a more economical way of studying the homology of $\PPP$ than merely the subposet of synors. Eventually, we will think of $(\mathcal{S}_*(\PPP),\delta_{*})$
  as a subcomplex of
  the reduced simplicial chain complex $(\redC_*(\PPP),\partial_{*})$ of $\PPP$, whose order chains are all supported in $\PPP_{synors}$.
  In particular, the boundary operator of the synor complex will
  coincide with the simplicial boundary operator. 
  However, we first develop the ideas more abstractly:
  
  \medskip

  Let $\PPP$ be a poset.
  A $\PPP$-\defn{graded complex} $(C_*,\partial_*)$ is a complex of 
  vector spaces over a field $\KK$ 
  such that 
  \begin{itemize}
	\item $C_i = \bigoplus_{x \in \PPP} C_i^{(x)}$ for $i \geq 0$, 
        \item $\partial_i\big(\, C_i^ {(x)} \big) \subseteq \bigoplus_{y \leq x} C_{i-1}^{(y)}$ for $i \geq 1$ and 
        \item $C_{-1} = \KK$.  
  \end{itemize}

We call a $\PPP$-graded complex \defn{strictly} $\PPP$-\defn{graded} if 
$\partial_i\big(\, C_i^ {(x)}\,\big) \subseteq \bigoplus_{y < x} C_{i-1}^{(y)}$ for $i \geq 1$. 

The reduced simplicial chain complex $(\redC_*(\PPP),\partial_*)$ of a poset $\PPP$ is $\PPP$-graded with $\redC_i(\PPP)^{(x)}$, $i \geq 0$, 
being the $\KK$-vector space spanned by the order chains of cardinality $i+1$
and largest element $x$. We also have, $\redC(\PPP)_{-1} = \KK$. 
Note that except for trivial cases this chain complex is not strictly $\PPP$-graded.  

Let $(C_*,\partial)$ be a $\PPP$-graded chain complex and $\JJJ \subseteq \PPP$ 
an order ideal.
We set $C_i^\JJJ = \bigoplus_{x \in \JJJ} C_i^{(x)}$, $i \geq 0$ and
$C_{-1}^J= \KK$. Then the differential $\partial_*$ restricts to a differential 
on $C^\JJJ_*$ and therefore the following holds.

\begin{Lemma} 
  If $(C_*,\partial_*)$ is a $\PPP$-graded complex and
  $\JJJ \subseteq \PPP$ is an order ideal in $\PPP$, then 
  $(C_*^\JJJ,\partial_*|_{C_*^\JJJ})$ is a $\JJJ$-graded complex. 
\end{Lemma}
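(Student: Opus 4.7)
The plan is essentially a direct unpacking of the definitions, using only the fact that $\JJJ$ is downward closed in $\PPP$. There is no real obstacle here; the content is verifying that restriction respects the grading hypotheses.

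First I would check that the differential actually restricts to $C^\JJJ_*$. Fix $x \in \JJJ$ and $i \geq 1$. Since $(C_*,\partial_*)$ is $\PPP$-graded, we have
\begin{equation*}
\partial_i\big(C_i^{(x)}\big) \subseteq \bigoplus_{y \leq x} C_{i-1}^{(y)}.
\end{equation*}
Because $\JJJ$ is an order ideal and $x \in \JJJ$, every $y \leq x$ lies in $\JJJ$, so the right-hand side is contained in $\bigoplus_{y \in \JJJ,\, y \leq x} C_{i-1}^{(y)} \subseteq C_{i-1}^\JJJ$. Summing over $x \in \JJJ$ gives $\partial_i(C_i^\JJJ) \subseteq C_{i-1}^\JJJ$, so $\partial_*|_{C_*^\JJJ}$ is a well-defined differential (the identity $\partial^2 = 0$ is inherited from $(C_*,\partial_*)$). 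For $i = 0$ we similarly have $\partial_0(C_0^{(x)}) \subseteq C_{-1} = \KK = C_{-1}^\JJJ$, so the boundary into degree $-1$ also restricts.

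Next I would verify the three defining conditions of a $\JJJ$-graded complex for $(C_*^\JJJ, \partial_*|_{C_*^\JJJ})$. The decomposition $C_i^\JJJ = \bigoplus_{x \in \JJJ} C_i^{(x)}$ for $i \geq 0$ holds by the very definition of $C_i^\JJJ$, and $C_{-1}^\JJJ = \KK$ is built into the definition. The support condition $\partial_i\big(C_i^{(x)}\big) \subseteq \bigoplus_{y \in \JJJ,\, y \leq x} C_{i-1}^{(y)}$ for $x \in \JJJ$ and $i \geq 1$ is precisely what the previous paragraph established. Hence all three conditions are satisfied, completing the proof.

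The only subtlety worth flagging is that the argument uses the order-ideal hypothesis in an essential way: without downward-closedness of $\JJJ$, the image of $\partial_i$ restricted to $C_i^{(x)}$ could contain components indexed by elements $y \leq x$ lying outside $\JJJ$, and no restriction would be possible. Everything else is bookkeeping.
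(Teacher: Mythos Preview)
Your proof is correct and matches the paper's approach: the paper treats this lemma as immediate from the definitions, simply noting before the statement that ``the differential $\partial_*$ restricts to a differential on $C^\JJJ_*$ and therefore the following holds.'' Your write-up spells out exactly the verification the paper leaves implicit.
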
 

If $(C_*,\partial_*)$ and $(D_*,\delta_*)$ are two $\PPP$-graded complexes, 
then a $\PPP$-\defn{graded chain map} $\phi : (C_*,\partial_*) \rightarrow (D_*,\delta_*)$ between 
$\PPP$-graded complexes is a map $\phi$ of chain complexes that satisfies 
$\phi(C_i^{(x)}) \subseteq D_i^ {(x)}$ for all $x \in \PPP, i \geq 0$ and also $\phi(C_{-1}) = D_{-1}$. 

A \defn{synor complex} $(\Sy(\PPP)_*,\delta_*)$ for the poset $\PPP$ is a 
strictly $\PPP$-graded complex together with an injective $\PPP$-graded chain map 
$\phi : (\Sy_*(\PPP),\delta_*) \rightarrow (\redC_*(\PPP),\partial_*)$ such that

\begin{itemize}
	\item[(S1)] $\dim_\KK \Sy_i(\PPP)^{(x)} = \dim_\KK\redH_{i-1}(\PPP_{< x})$ for every $x\in \PPP$.
  \item[(S2)] for every order ideal $\JJJ$ in $\PPP$ the restriction 
	  of $\phi$ to $\Sy_*(\PPP)^\JJJ$ induces an isomorphism between 
     $\Hom_*(\Sy(\PPP)^\JJJ)$ and $\redH_*(\JJJ)$.
\end{itemize}
We first prove the existence of such objects. We should mention that uniqueness is not guaranteed in general.
\begin{Proposition} \label{prop:synor} 
	For every poset $\PPP$ there exists a synor complex $(\Sy_*(\PPP),\delta_*)$
  for $\PPP$. 
\end{Proposition}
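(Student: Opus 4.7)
The plan is to construct $(\Sy_*(\PPP), \delta_*)$ and $\phi$ by induction on $|\PPP|$. Fix a linear extension $x_1, x_2, \ldots, x_N$ of $\PPP$ and write $\JJJ_k = \{x_1, \ldots, x_k\}$; every $\JJJ_k$ is an order ideal of $\PPP$. The inductive hypothesis at stage $k$ is that we have produced a strictly $\JJJ_k$-graded complex $(\Sy_*(\JJJ_k), \delta_*)$ equipped with an injective $\JJJ_k$-graded chain map $\phi_k \colon \Sy_*(\JJJ_k) \to \redC_*(\JJJ_k)$ satisfying (S1) and (S2) for every order ideal of $\JJJ_k$. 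The base case $\JJJ_0 = \emptyset$ is trivial: take $\Sy_{-1} = \KK$ and every other graded piece to be zero, with $\phi_0$ the identity on $\KK$.

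For the inductive step write $x = x_k$; then $\PPP_{<x}$ is an order ideal contained in $\JJJ_{k-1}$. Apply (S2) at stage $k-1$ to the ideal $\PPP_{<x}$ to identify $\Hom_{i-1}(\Sy(\JJJ_{k-1})^{\PPP_{<x}}) \cong \redH_{i-1}(\PPP_{<x})$. For each $i \geq 0$ choose a $\KK$-basis of $\redH_{i-1}(\PPP_{<x})$ and lift each basis element to a cycle $\zeta^{(i,r)} \in \Sy_{i-1}(\JJJ_{k-1})^{\PPP_{<x}}$. Declare $\Sy_i(\JJJ_k)^{(x)}$ to be the free $\KK$-vector space on symbols $\bs\epsilon^{(i,r)}$, one per chosen cycle, set $\delta \bs\epsilon^{(i,r)} := \zeta^{(i,r)}$, and put $\phi_k(\bs\epsilon^{(i,r)}) := (x) * \phi_{k-1}(\zeta^{(i,r)})$, inheriting every other graded piece of $\Sy$ and the map $\phi$ from the previous stage.

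Routine verification then shows that the new data forms a strictly $\JJJ_k$-graded chain complex equipped with an injective $\JJJ_k$-graded chain map. Indeed, $\delta^2 \bs\epsilon^{(i,r)} = \delta \zeta^{(i,r)} = 0$ since $\zeta^{(i,r)}$ was chosen to be a cycle; strict grading at $x$ holds because $\zeta^{(i,r)} \in \bigoplus_{y < x} \Sy_{i-1}^{(y)}$; the chain-map identity for $\phi_k$ follows from the elementary calculation $\partial_i((x) * \gamma) = \gamma - (x) * \partial_{i-1}\gamma$, which for $\gamma = \phi_{k-1}(\zeta^{(i,r)})$ collapses to $\partial \phi_k(\bs\epsilon^{(i,r)}) = \phi_{k-1}(\zeta^{(i,r)}) = \phi_k(\delta \bs\epsilon^{(i,r)})$ because $\phi_{k-1}$ is a chain map and $\zeta^{(i,r)}$ is a cycle; injectivity of $\phi_k$ on the new generators reduces to the linear independence of the $\phi_{k-1}(\zeta^{(i,r)})$ in $\redC_{i-1}(\PPP_{<x})$, which is forced by the fact that their homology classes form a basis of $\redH_{i-1}(\PPP_{<x})$. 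Condition (S1) at $x$ holds by our choice of cardinality.

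The main obstacle is propagating (S2) to order ideals $J \subseteq \JJJ_k$ that contain $x$, since for $J$ not containing $x$ the condition is immediate from the inductive hypothesis. Set $J' := J \setminus \{x\}$, again an order ideal, and note $\PPP_{<x} \subseteq J'$. Strict grading produces a short exact sequence of complexes
$$0 \longrightarrow \Sy_*(\JJJ_{k-1})^{J'} \longrightarrow \Sy_*(\JJJ_k)^J \longrightarrow \Sy_*(\JJJ_k)^{(x)} \longrightarrow 0$$
in which the quotient has zero differential, because $\delta$ carries $\Sy_i(\JJJ_k)^{(x)}$ into $\Sy_{i-1}(\JJJ_{k-1})^{\PPP_{<x}} \subseteq \Sy_{i-1}(\JJJ_{k-1})^{J'}$. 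In parallel, the quotient $\redC_*(J)/\redC_*(J')$ consists of chains having $x$ as their maximum and therefore identifies (up to a degree shift) with the cone over $\redC_*(\PPP_{<x})$, yielding a long exact sequence for $\redH_*(J)$ whose connecting map is, up to sign, induced by the inclusion $\PPP_{<x} \hookrightarrow J'$. The chain map $\phi_k$ produces a morphism of these two long exact sequences: the $J'$-terms are isomorphisms by the inductive hypothesis, and the middle term $\Sy_i(\JJJ_k)^{(x)} \to \redH_{i-1}(\PPP_{<x})$ is an isomorphism precisely because the $[\zeta^{(i,r)}]$ were chosen as a basis. The 5-lemma then delivers (S2) for $J$, completing the induction after $N$ steps.
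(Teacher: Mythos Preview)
Your argument is correct and follows essentially the same route as the paper: induction on $|\PPP|$ by adjoining a maximal element $x$, defining $\Sy_i^{(x)}$ via a chosen basis of $\redH_{i-1}(\PPP_{<x})$ lifted through the synor complex, setting $\phi$ to be $(x)*\phi(\zeta)$ and $\delta$ to be $\zeta$, and then verifying (S2) for ideals containing $x$ via the five-lemma on the long exact sequences of the pair $(J,J\setminus\{x\})$. One small terminological slip: the quotient $\redC_*(J)/\redC_*(J')$ is not the cone over $\redC_*(\PPP_{<x})$ but rather (up to sign of the differential) the degree shift $\redC_{*-1}(\PPP_{<x})$, which is exactly what your subsequent identification $H_i(\text{quotient})\cong\redH_{i-1}(\PPP_{<x})$ uses; the rest of your five-lemma step is unaffected.
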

\begin{proof}
  We prove the claim by induction on $|\PPP|$. 

  If $|\PPP| \leq 1$ then we can take as
  $(\Sy_*(\PPP),\delta_*)$ the reduced simplicial chain complex of
$\PPP$ and set $\phi$ to be
  the identity.

  Now assume $|\PPP| \geq 2$. Let $x$ be a maximal element of $\PPP$. By induction there exists a synor complex
  $(\Sy_*(\PPP-x),\delta_*)$ for $\PPP-x$ together with the appropriate injective $(\PPP-x)$-graded chain map $\phi : 
  (\Sy_*(\PPP-x),\delta_*) \rightarrow (\redC_*(\PPP-x),\partial_*)$ inducing isomorphisms in homology.

  For each $i$, we set $\Sy_i(\PPP)^{(x)}$ to be the 
  $\KK$-vector space with basis indexed by symbols $\{x\star \zeta:\zeta \in \Zeta_{i-1}\}$, where $\Zeta_{i-1}$ denotes a set of cycle representatives of a basis of $\redH_{i-1}(\Sy(\PPP_{<x}))$.
  By use of (S2) for the ideal $\PPP_{<x}$ inside $\PPP-x$, we derive that $\Sy_*(\PPP)^ {(x)}$ satisfies 
  (S1). 

  We extend $\phi$ to each $\Sy_i(\PPP)^{(x)}$ by defining
  $\phi (x\star \zeta):=x*\phi(\zeta)$ for all $\zeta \in Z_{i-1}$ and extending linearly.
  
  
  Define $\Sy_*(\PPP)$ as the direct sum of $\Sy_*(\PPP-x)$ and $\Sy_*(\PPP)^{(x)}$
  and extend the differential to each $\Sy_i(\PPP)^{(x)}$ by
	setting $\delta_i(x\star\zeta) =  \zeta \in \Sy_{i-1}(\PPP-x)$ for all $\zeta\in Z_{i-1}$. It 
 follows immediately that $(\Sy_*(\PPP),\delta_*)$ is a strictly $\PPP$-graded complex and $\phi$ is 
  a $\PPP$-graded chain map 
  $(\Sy_*(\PPP),\delta_*) \rightarrow (\redC_*(\PPP),\partial_*)$, which is still injective.

  Now assume that we are given an order ideal $\JJJ$ in $\PPP$. If the order
  ideal does not contain $x$ then $\Sy_*(\PPP-x)^\JJJ = \Sy_*(\PPP)^\JJJ$ and (S2) 
  follows by induction.
  
  Let $\JJJ$ be an order ideal containing $x$. 
  Consider the quotient complex $(\Sy_*(\JJJ,\JJJ-x),\delta_{*})$ with $\Sy_i(\JJJ, \JJJ-x) = \Sy_i(\JJJ)/\Sy_i(\JJJ-x) \cong \Sy_i(\PPP)^{(x)}$ for all $i$. 
  We claim that the induced chain map $\phi':(\Sy_{*}(\JJJ,\JJJ-x),\delta_*)\rightarrow (\redC_{*}(\JJJ,\JJJ-x),\partial_*)$ defined by $$\phi'(x\star\zeta+\Sy_{i}(\JJJ-x))=\phi(x\star\zeta)+\redC_i(\JJJ-x)=x\ast\phi(\zeta)+\redC_i(\JJJ-x)$$ for every $i$, induces an isomorphism in homology. Since the boundary map of the complex $(\Sy_{*}(\JJJ,\JJJ-x),\delta_{*})$ is trivial, this is the same as saying that the induced map $\phi':\Sy_{i}(\JJJ,\JJJ-x)\cong \Sy_i(\PPP)^{(x)}\rightarrow \redH_i(\JJJ,\JJJ-x)$ is an isomorphism for every $i$.
  To see that this is true, we factor $\phi'$ as 
  \begin{figure}[H]
     
		\begin{tikzcd}
		      S_{i}(P)^{(x)} 
                \arrow{r}{\psi} \arrow{dr}{\phi'} 
            & 
                \redH_i(\JJJ_{\leq x},\JJJ_{<x}) \arrow{d}{j} \\ 
            &    
                \redH_i(\JJJ,\JJJ-x) 
            &
		\end{tikzcd}
  \end{figure}
 where $\psi:\Sy_i(\PPP)^{(x)}\rightarrow \redH_{i}(\JJJ_{\leq x},\JJJ_{<x})$ is the map sending $x\star \zeta$ to the representative $x\ast \phi(\zeta)$ for all $\zeta\in Z_{i-1}$, while $j$ is induced by inclusion of pairs. 
 The set $\{\phi(\zeta):\zeta\in Z_{i-1}\}$ considered as a set of cycles in $\redC_{i-1}(\JJJ_{<x})$ yields a basis of $\redH_{i-1}(\JJJ_{<x})$ by induction and (S2). Consequently, one can show that the relative cycles represented by $\{x*\phi(\zeta):\zeta\in Z_{i-1}\}$ yield a basis of $\redH_{i}(\JJJ_{\leq x},\JJJ_{<x})$.
	Since the map $\psi$ sends a basis to a basis, it is an isomorphism. 
        Finally, $j$ is also an isomorphism by excision or direct computation.

  \begin{figure}[H]
     \begin{centering}
		\begin{tikzcd}
			0 \arrow[r] & S_i(\JJJ-x) \arrow[d,"\phi"] \arrow[r] 
			& S_i(\JJJ) \arrow[d,"\phi"] \arrow[r] 
			& S_i(\JJJ,\JJJ-x) \arrow[r] \arrow[d,"\phi'"] & 0 \\
			0 \arrow[r] & \redC_i(\JJJ-x) \arrow[r] 
			& \redC_i(\JJJ) \arrow[r] 
			& \redC_i(\JJJ,\JJJ-x) \arrow[r] & 0 
		\end{tikzcd}
		\caption{Commutative diagram with exact rows}
		\label{fig:1} 
     \end{centering}
  \end{figure}
  \begin{figure}[H]
	\begin{centering}
		\begin{tikzcd}
			\arrow[r] 
			& \Hom_i(\Sy(\JJJ-x)) \arrow[d,"\phi"] \arrow[r] 
			& \Hom_i(\Sy(\JJJ)) \arrow[d,"\phi"] \arrow[r] 
			& \Hom_{i}(\Sy(\JJJ,\JJJ-x)) \arrow[d,"\phi'"] \arrow[r] & \,\\
			\arrow[r] 
			& \redH_i(\JJJ-x)  \arrow[r] 
			& \redH_i(\JJJ) \arrow[r] 
			& \redH_{i}(\JJJ,\JJJ-x) \arrow[r] & \,  
		\end{tikzcd}
		\caption{Induced commutative diagram of long exact sequences}
		\label{fig:2} 
	\end{centering}
	\end{figure}

  \ref{fig:1} induces the diagram \ref{fig:2} of long exact sequences.
  Since the left $\phi$ map and $\phi'$ are isomorphisms, it follows by the five lemma that the
  middle $\phi$ map is an isomorphism as well.
  Now (S2) follows.
\end{proof}
\medskip

The next corollary is now immediate.

\begin{Corollary}
	\label{relaxrel}
	Let $\PPP$ be a poset and $\JJJ \subseteq \PPP$ an order ideal. 
	For a synor complex $(\Sy_*(\PPP),\delta_*)$ for $\PPP$ the inclusion
	of $\Sy_*(\PPP)$ into $\redC_*(\PPP)$ induces an isomorphism in
	homology of $\Hom_*(\Sy(\PPP),\Sy(\JJJ))$ and $\redH_*(\PPP,\JJJ)$. 
\end{Corollary}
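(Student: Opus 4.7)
The plan is to deduce this relative statement from property (S2) by a five lemma argument on the long exact sequences of the pairs, mirroring the same maneuver that already appeared inside the proof of \ref{prop:synor} (with $J-x \subseteq J$ replaced by $\JJJ \subseteq \PPP$).

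First I would check that $\Sy_*(\JJJ) = \Sy_*(\PPP)^\JJJ$ really is a subcomplex of $\Sy_*(\PPP)$, and that $\phi$ respects the inclusion. Because $(\Sy_*(\PPP),\delta_*)$ is \emph{strictly} $\PPP$-graded and $\JJJ$ is an order ideal, for every $x \in \JJJ$ the boundary $\delta$ sends $\Sy_*(\PPP)^{(x)}$ into $\bigoplus_{y < x} \Sy_*(\PPP)^{(y)} \subseteq \Sy_*(\PPP)^\JJJ$, so $\Sy_*(\PPP)^\JJJ$ is closed under $\delta$. The simplicial subcomplex $\redC_*(\JJJ) \subseteq \redC_*(\PPP)$ is automatic on the other side, and since $\phi$ is a $\PPP$-graded chain map it restricts to a chain map $\Sy_*(\JJJ) \to \redC_*(\JJJ)$.

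Next, I would record the two short exact sequences of chain complexes
$$0 \to \Sy_*(\JJJ) \to \Sy_*(\PPP) \to \Sy_*(\PPP)/\Sy_*(\JJJ) \to 0,$$
$$0 \to \redC_*(\JJJ) \to \redC_*(\PPP) \to \redC_*(\PPP)/\redC_*(\JJJ) \to 0,$$
note that $\phi$ induces a chain map between them, and pass to the induced commuting ladder of long exact sequences in homology. By (S2) applied with $J = \JJJ$ and with $J = \PPP$, the vertical maps $\Hom_*(\Sy(\JJJ)) \to \redH_*(\JJJ)$ and $\Hom_*(\Sy(\PPP)) \to \redH_*(\PPP)$ are isomorphisms in every degree.

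Applying the five lemma at each degree to the ladder then forces the induced map $\Hom_*(\Sy(\PPP),\Sy(\JJJ)) \to \redH_*(\PPP,\JJJ)$ to be an isomorphism, which is exactly the claim. There is no real obstacle: the only non-formal input is property (S2), which has already been established in \ref{prop:synor}, and the only pitfall is verifying that strict $\PPP$-grading of $\Sy_*(\PPP)$ makes $\Sy_*(\JJJ)$ a genuine subcomplex so that the short exact sequences are available.
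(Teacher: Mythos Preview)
Your argument is correct and is essentially the same as the paper's: form the short exact sequences of pairs, use (S2) for $\JJJ$ and for $\PPP$ to get isomorphisms on the two left vertical maps, and conclude by the five lemma. The paper presents this a bit more tersely via a single commutative diagram, but the content is identical.
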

\begin{proof}
	Consider the following commutative diagram of chain groups with exact rows.

     \begin{center}
		\begin{tikzcd}
			0 \arrow[r] & S_i(\JJJ) \arrow[d,"\phi"] \arrow[r] 
			& S_i(\PPP) \arrow[d,"\phi"] \arrow[r] 
			& S_i(\PPP,\JJJ) \arrow[r] \arrow[d,"\phi'"] & 0 \\
			0 \arrow[r] & \redC_i(\JJJ) \arrow[r] 
			& \redC_i(\PPP) \arrow[r] 
			& \redC_i(\PPP,\JJJ) \arrow[r] & 0 
		\end{tikzcd}
     \end{center}

	The two leftmost vertical arrows induce isomorphisms in homology 
	by \ref{prop:synor}. By the five-lemma  this then implies that the rightmost map
	induces an isomorphism of $\Hom_*(\Sy(\PPP),\Sy(\JJJ))$ and $\Hom_*(C(\PPP),C(\JJJ))= \Hom_*(\PPP,\JJJ)$. 
\end{proof}

The following property of a synor complex is obviously true in the case of 
the synor complex constructed in the
proof of \ref{prop:synor}. But the lemma guarantees that it must in fact hold
for any synor complex. To state it, we set up some notation that will appear frequently from now on.

For $x \in \PPP$ we call an element of $\Sy_*(\PPP)^{(x)}$ a 
\defn{principal synor chain} of $\PPP$.
For a principal synor chain $\gamma \in \Sy_{i}(\PPP)^{(x)}$ it follows
that $\phi(\gamma)$ is a linear combination of order chains
$c = (x=x_0 > \cdots > x_i)$. 
For $0 \leq \ell \leq i$ we write 
$X_\ell^\gamma$ for the collection of all order chains 
$(x_0 > \cdots > x_{\ell})$ for which there is a chain
$c = (x_0 > \cdots > x_{\ell} > \cdots > x_i)$ in the support
of $\phi(\gamma)$. For any chain $c = (x_0 > \cdots > x_i)$ we write $\min(c)$ for $x_i$.

\begin{Lemma}
  \label{lem:lrepresentation}
  Let $x \in \PPP$ and let $\gamma \in \Sy_i(\PPP)^{(x)}$ be a principal synor chain.
  Then for $0 \leq \ell \leq i$ we can write 
  $\phi(\gamma) = \sum_{\chi \in X_\ell^\gamma} \chi*\phi(\zeta_\chi)$ for 
 cycles $\zeta_\chi \in \bigoplus_{y < min(\chi)} \Sy_{i-\ell-1}(\PPP)^ {(y)}$.
\end{Lemma}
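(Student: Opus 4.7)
The plan is to induct on $\ell$; the base case $\ell = 0$ carries all the real content, and the inductive step is just an iterated application of the base case, split along the $\PPP$-grading of $\Sy_*(\PPP)$.

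For $\ell = 0$, the set $X_0^\gamma$ is $\{(x)\}$, so the goal is to produce a cycle $\zeta_{(x)} \in \bigoplus_{y<x} \Sy_{i-1}(\PPP)^{(y)}$ with $\phi(\gamma) = (x) * \phi(\zeta_{(x)})$. Since $\phi$ is $\PPP$-graded, $\phi(\gamma) \in \redC_i(\PPP)^{(x)}$, hence $\phi(\gamma) = x * \theta$ for some $\theta \in \redC_{i-1}(\PPP_{<x})$. Applying the simplicial boundary formula $\partial(x * \theta) = \theta - x * \partial \theta$ together with $\partial \phi(\gamma) = \phi(\delta \gamma)$, and using strict $\PPP$-grading to conclude that $\phi(\delta \gamma)$ is supported only on chains whose maximum is strictly below $x$, the identity $\theta - x * \partial \theta = \phi(\delta \gamma)$ separates according to whether a term contains $x$; this forces both $\partial \theta = 0$ and $\theta = \phi(\delta \gamma)$. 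Hence $\zeta_{(x)} := \delta \gamma$ works: it is a cycle by $\delta^2 = 0$, and it lies in the required summand by strict $\PPP$-grading.

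The inductive step uses the level-$(\ell-1)$ identity $\phi(\gamma) = \sum_{\chi' \in X_{\ell-1}^\gamma} \chi' * \phi(\zeta_{\chi'})$, with $\zeta_{\chi'}$ a cycle in $\bigoplus_{y < \min(\chi')} \Sy_{i-\ell}(\PPP)^{(y)}$. I decompose each $\zeta_{\chi'}$ into its principal components $\zeta_{\chi'}^{(y)} \in \Sy_{i-\ell}(\PPP)^{(y)}$ -- these need not be cycles individually, but they are principal synor chains, which is all the base case requires -- and apply the base case to each, yielding $\phi(\zeta_{\chi'}^{(y)}) = (y) * \phi(\delta \zeta_{\chi'}^{(y)})$. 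Substituting back and setting $\chi := \chi' * (y)$ and $\zeta_\chi := \delta \zeta_{\chi'}^{(y)}$, the associativity of $*$ gives $\phi(\gamma) = \sum_{\chi} \chi * \phi(\zeta_\chi)$; as $(\chi', y)$ runs over pairs with $\zeta_{\chi'}^{(y)} \neq 0$, the chains $\chi$ bijectively exhaust $X_\ell^\gamma$, and each $\zeta_\chi$ lies in the prescribed summand and is a cycle by $\delta^2 = 0$.

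The main obstacle to watch out for is the tempting but insufficient move of invoking property (S2) on $\PPP_{<x}$ to produce $\zeta_{(x)}$: this only yields a cycle in the synor complex whose image is homologous to $\theta$, not equal to $\theta$, which is too weak for the claim. The point is that the strict $\PPP$-grading of the synor complex -- the feature distinguishing a synor complex from an arbitrary subcomplex quasi-isomorphic to $\redC_*(\PPP)$ -- already pins down $\theta$ on the nose as $\phi(\delta \gamma)$, so no homology-level adjustment is needed and the argument works uniformly for any synor complex.
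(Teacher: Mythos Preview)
Your proof is correct and follows essentially the same route as the paper's: induction on $\ell$, with the base case handled by writing $\phi(\gamma)=x*\theta$, applying $\partial$, and using strict $\PPP$-grading of $\Sy_*(\PPP)$ together with $\phi$ being a $\PPP$-graded chain map to force $\partial\theta=0$ and $\theta=\phi(\delta\gamma)$; the inductive step likewise decomposes each $\zeta_{\chi'}$ into its principal synor components and reapplies the base case. Your remark that (S2) alone would only give $\theta$ up to homology, and that it is the strict grading that pins it down exactly, is a useful clarification of why the argument works for any synor complex.
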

\begin{proof}
  We proceed by induction on $\ell$. If $\ell = 0$ then 
  by $\phi(\gamma) \in \redC_i(\PPP)^{(x)}$ it follows that
  $\phi(\gamma) = x* \zeta$ and $X_0^\gamma = \{\,(x)\,\}$. We then have
  $$\phi(\delta_i(\gamma)) = \partial_i(\phi(\gamma)) = \partial_i( x*\zeta) = \zeta - x*\partial_{i-1}(\zeta).$$
  Since the synor complex is strictly $\PPP$-graded and $\phi$ is a $\PPP$-graded chain map, it follows
  that $\partial_{i-1}(\zeta) =  0$ and thus $\phi(\gamma)=x*\zeta=x*\phi(\delta_{i}(\gamma))$ is precisely of the desired form, because the cycle $\delta_{i}(\gamma)$ is clearly in $ \bigoplus_{y < x} \Sy_{i-1}(\PPP)^ {(y)}$.

  Now assume that for some $\ell < i$ we have
  $\phi(\gamma) = \sum_{\chi \in X_\ell^\gamma} \chi*\phi(\zeta_\chi)$,
  where each
  $\zeta_\chi$ is a cycle in $\bigoplus_{y < \min(\chi)} \Sy_{i-\ell-1}(\PPP)^{(y)}$. It follows that
  $\phi(\zeta_\chi) = \sum_{y < min(\chi)} \phi(\gamma_{\chi,y})$, where each $\gamma_{\chi,y}$ is a principal 
  $(i-\ell-1)$-synor chain in $\Sy_{i-\ell-1}(\PPP)^{(y)}$. By the induction basis we have that $\phi(\gamma_{\chi,y}) = y * \zeta_{\chi,y}$ for some $(i-\ell-2)$-cycle $\zeta_{\chi,y}\in \bigoplus_{z < y} \Sy_{i-\ell-2}(\PPP)^{(z)}$. Injectivity of $\phi$ easily implies that
  $X_{\ell+1}^\gamma = \big\{\, \chi*y~\big|~
  \chi \in X_\ell^\gamma, \zeta_{\chi,y} \neq 0\,\big\}$.
  But then 
  $$\phi(\gamma) = \sum_{\chi \in X_\ell^\gamma}\chi*\big(  \sum_{y < \min(\chi)} y*\phi(\zeta_{\chi,y})\,\big).$$
  Setting $\zeta_{\chi'} = \zeta_{\chi,y}$ for $\chi' = \chi * y$
  then yields
  $$\phi(\gamma)=\sum_{\chi' \in X_{\ell+1}^\gamma} \chi' * \phi(\zeta_{\chi'})$$ where the $\zeta_{\chi'}$ are clearly of the desired form.
  This completes the induction step.
\end{proof}

We call a representation $\phi(\gamma) = \sum_{\chi \in X_\ell^\gamma} \chi*\phi(\zeta_\chi)$ of a principal synor chain $\gamma$ an \defn{$\ell$-representation} of $\gamma$. 
Shifting our attention away from the embedding $\phi$, we may identify the synor complex $(\Sy_*(\PPP),\delta_*)$ with its image under $\phi$. Thus,
from now on we will always consider $(\Sy_*(\PPP),\delta_*)$ as a 
subcomplex of $(\redC_*(\PPP),\partial_*)$. Then, an
$\ell$-representation of the principal synor chain $\gamma$ can be written as
$\gamma = \sum_{\chi \in X_\ell^\gamma} \chi*\zeta_\chi$ where each $\zeta_{\chi}$ is a cycle in $\bigoplus_{y < min(\chi)} \Sy_{i-\ell-1}(\PPP)^ {(y)}$.

\section{Shuffling elements into cycles}
\label{sec:shuffling}
In this section we introduce a sequence of lemmas for future reference. The first two (\ref{shufflethatshit},\ref{chi}) concern a vanishing property associated with the synor complex. The third (\ref{ro}) provides a useful map that sends general order chains to synor chains. The fourth, \ref{lem:homologous}, concerns an obvious result in poset homology.

To initiate the discussion, let $\ccf = (x_0 > \cdots > x_m), \ccf' = (x_0' > \cdots > x_m')$ be 
two order chains in $\PPP$.
For $0 \leq j \leq m$ we set $\ccf \sim_j \ccf'$ if
$x_i = x_i'$ for $i \neq j$ and $0 \leq i \leq m$. 
For a fixed order chain $\ccf = (x_0 > \cdots > x_m)$ in $\PPP$
and an $m$-chain $\tau = \sum_{\ell=1}^k \lambda_{\ell} \ccf_{\ell} \in \redC_m(\PPP)$, 
we write $[\tau:\ccf]_j$ for $\displaystyle{\sum_{\ccf_{\ell} \sim_j \ccf} \lambda_{\ell}}$. 

The following property of synor cycles distinguishes them from general simplicial cycles:

\begin{Lemma}
  \label {shufflethatshit}
  If $\zeta \in \Sy_m(\PPP)$ is a cycle and $\ccf\in\redC_m(\PPP)$ then $[\zeta:\ccf]_j = 0$ for $j=0, \ldots, m$. 
\end{Lemma}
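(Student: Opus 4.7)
My plan is to proceed by induction on $m$. The base case $m=0$ is immediate: for a $0$-chain $\ccf = (x_0)$ there are no other positions to compare against, so every $0$-chain is $\sim_0$-equivalent to $\ccf$. Hence $[\zeta : \ccf]_0$ equals the total sum of the coefficients of $\zeta$, which equals the augmentation $\partial_0 \zeta = 0$.

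For the inductive step, I would decompose $\zeta$ via the $\PPP$-grading on $\Sy_m(\PPP)$. Writing $\zeta = \sum_{x \in \PPP} \zeta^{(x)}$ with $\zeta^{(x)} \in \Sy_m(\PPP)^{(x)}$, the construction of the synor complex (specifically, the $0$-representation from \ref{lem:lrepresentation}) implies that each $\zeta^{(x)}$ can be written as $\zeta^{(x)} = x * \eta_x$ for some cycle $\eta_x \in \bigoplus_{y<x} \Sy_{m-1}(\PPP)^{(y)}$. By strict $\PPP$-grading, $\partial(x * \eta_x) = \eta_x - x * \partial \eta_x = \eta_x$, so the cycle condition $\partial \zeta = 0$ reduces to the relation $\sum_{x \in \PPP} \eta_x = 0$ in $\redC_{m-1}(\PPP)$.

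Now fix $\ccf = (x_0 > x_1 > \cdots > x_m)$. For $j \geq 1$, every order chain $\sim_j \ccf$ has the same maximum $x_0$, so only $\zeta^{(x_0)}$ contributes, and a direct reindexing (removing the common first entry $x_0$) gives $[\zeta : \ccf]_j = [x_0 * \eta_{x_0} : \ccf]_j = [\eta_{x_0} : (x_1 > \cdots > x_m)]_{j-1}$. Since $\eta_{x_0}$ is a cycle in $\Sy_{m-1}(\PPP)$, the induction hypothesis yields zero.

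The case $j=0$ is the main obstacle, since order chains $\sim_0 \ccf$ take the form $(y > x_1 > \cdots > x_m)$ for varying $y > x_1$ with different maxima, so we cannot isolate a single $\zeta^{(x)}$. Here strict $\PPP$-grading is decisive: each chain $(y, x_1, \ldots, x_m)$ appears in $\zeta^{(y)} = y * \eta_y$ with coefficient $[\eta_y : (x_1, \ldots, x_m)]$, and this coefficient vanishes automatically unless $y > x_1$, because $\eta_y$ is supported on chains whose maximum lies strictly below $y$. Extracting the coefficient of $(x_1 > \cdots > x_m)$ from the relation $\sum_x \eta_x = 0$ obtained above then yields
$$[\zeta : \ccf]_0 \;=\; \sum_{y > x_1} [\eta_y : (x_1, \ldots, x_m)] \;=\; 0,$$
completing the induction.
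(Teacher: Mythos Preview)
Your proof is correct and follows essentially the same approach as the paper's: induction on $m$, decomposing $\zeta$ as $\sum_x x * \eta_x$ with each $\eta_x$ a synor $(m-1)$-cycle via the $0$-representation, handling $j\ge 1$ by stripping the top element and invoking the inductive hypothesis on $\eta_{x_0}$, and handling $j=0$ by reading off the coefficient of $(x_1>\cdots>x_m)$ in the relation $\sum_x \eta_x = 0$. The only cosmetic difference is that you make explicit why the sum over $y>x_1$ can be extended to all $y$ (support considerations), which the paper leaves implicit.
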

\begin{proof}
  We proceed by induction on $m$. 
  
  If $m = 0$ then $j=0$ and $\bar{\ccf} \sim_j \ccf$ for all $0$-order chains $\bar{\ccf}$. Thus, writing $\zeta=\sum_{\ell=0}^{k}\lambda_{\ell}\,(v_{\ell})$  yields  $0=\partial_{0}(\zeta)=\sum_{\ell=0}^{k}\lambda_{\ell}=[\zeta:\ccf]_{0}$ as desired.

  Assume now that $m>0$.
  By the previous section, we can write $$\zeta=\sum_{x\in \PPP} x*\zeta_{x}$$ for $(m-1)$-cycles $\zeta_x \in \bigoplus_{y<x}\Sy_{m-1}(\PPP)^{(y)}$. Note that for $x\in \PPP\setminus X^{\zeta}_{0}$, the corresponding cycle $\zeta_{x}$ is of course zero.

  Set $\ccf' = (x_{1} > \cdots > x_{m})$, where $\ccf=(x_{0} > \cdots > x_{m})$. 
  Consider the case $1 \leq j \leq m$, for which we have $[\zeta:\ccf]_j = [\zeta_{x_{0}}:\ccf']_{j-1}$.
  Applying the inductive hypothesis to $\zeta_{x_{0}}\in\Sy_{m-1}(\PPP)$ and $c'\in \redC_{m-1}(\PPP) $, we get $[\zeta_{x_{0}}:\ccf']_{j-1} = 0$ which implies $[\zeta:\ccf]_j =0$.

  It remains to consider the case $j = 0$.  
  Since $\partial_m(x*\zeta_x) = \zeta_x$ and since $\zeta$ is a cycle,
  it follows that $\sum_{x \in \PPP} \zeta_x = 0$.
  In particular, if $\lambda_x$ is the coefficient of the term 
  $\ccf'$ appearing in $\zeta_x$, then $\sum_{x \in \PPP} \lambda_x = 0$. 
	Note that we do not exclude the possibility $\lambda_{x}=0$. Observe then that $[x * \zeta_x:\ccf]_0 = \lambda_x$, which gives 
   $[\zeta:\ccf]_0 = \sum_{x\in \PPP}\lambda_{x} = 0$ as desired.
\end{proof}

We now use \ref{shufflethatshit} to prove a surprising vanishing result associated with principal synor chains.
To state it, we let $\gamma\in\Sy_m(\PPP)^{(x)}$ be a principal synor chain with $0$-representation $\gamma=x*\zeta$.
Consider the equivalence classes $X_\ell^\gamma/\sim_j$ of the relation $\sim_j$. 
We write $[\chi]$ for the class of $\chi \in X_\ell^\gamma$.









\begin{Lemma}\label{chi}
   Let\footnote[9]{Note that the result is clearly false for $j=0$.} $1 \leq j \leq  \ell \leq m$ and let $\gamma = x*\zeta \in \Sy_m(\PPP)^{(x)}$ with $\ell$-representation
   \[\gamma =  \sum_{\chi \in X_\ell^\gamma} \chi * \zeta_{\chi}.\]
   Then, for any $[\chi] \in X_\ell^ \gamma/\sim_j$ we have 
	$$\sum_{\chi' \in [\chi]} \zeta_{\chi'} = 0.$$
\end{Lemma}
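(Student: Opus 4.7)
The plan is to induct on $\ell \geq 1$, leveraging the recursive construction of $(\ell+1)$-representations from $\ell$-representations in the proof of \ref{lem:lrepresentation}: each cycle $\zeta_{\chi} \in \bigoplus_{y<\min(\chi)} \Sy_{m-\ell-1}(\PPP)^{(y)}$ decomposes via the $\PPP$-graded structure of $\Sy$ as $\zeta_{\chi} = \sum_{y} y * \zeta_{\chi,y}$, where each $y * \zeta_{\chi,y}$ is a principal synor chain in $0$-representation, and in the refined $(\ell+1)$-representation one has $\zeta_{\chi*y} = \zeta_{\chi,y}$.

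The base case $\ell=1$ only involves $j=1$. Since every chain in $X_1^{\gamma}$ starts with $x$, the unique $\sim_1$-class is all of $X_1^{\gamma}$. Comparing the $0$-representation $\gamma = x*\zeta$, with $\zeta$ a cycle, against the $1$-representation $\gamma = \sum_{y} (x,y) * \zeta_{(x,y)}$ yields $\zeta = \sum_{y} y * \zeta_{(x,y)}$; applying $\partial$ and using that each $\zeta_{(x,y)}$ is already a cycle gives $\sum_{y} \zeta_{(x,y)} = 0$ at once. The same pattern --- a cycle written as $\sum y' * (\text{cycles})$ forcing the summands to cancel --- also handles the inductive sub-case $j=\ell+1$, where the $\sim_{\ell+1}$-class of $\chi*y$ in $X_{\ell+1}^{\gamma}$ varies only the last entry and so reduces to applying the boundary to $\zeta_{\chi} = \sum_{y'} y'*\zeta_{\chi,y'}$.

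The main case is $1 \leq j \leq \ell$. Here every $\sim_j$-class in $X_{\ell+1}^{\gamma}$ is of the form $\{\chi''*y : \chi'' \in [\chi]\}$ for a common last element $y$ and a $\sim_j$-class $[\chi]$ in $X_{\ell}^{\gamma}$. The inductive hypothesis applied to the $\ell$-representation yields $\sum_{\chi'' \in [\chi]} \zeta_{\chi''} = 0$. Expanding $\zeta_{\chi''} = \sum_{y'} y' * \zeta_{\chi'',y'}$ and reorganising gives $\sum_{y'} y' * \bigl(\sum_{\chi''} \zeta_{\chi'',y'}\bigr) = 0$; the direct sum decomposition $\Sy_{*}(\PPP) = \bigoplus_{y} \Sy_{*}(\PPP)^{(y)}$ together with the injectivity of the operator $y' * (-)$ on chains supported in $\PPP_{<y'}$ then forces $\sum_{\chi''} \zeta_{\chi'',y'} = 0$ for each $y'$ separately, and setting $y'=y$ yields the conclusion. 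The main obstacle to watch is the sub-case $j=\ell$, where $\min(\chi'')$ itself varies over $\chi'' \in [\chi]$, so the admissible range $y' < \min(\chi'')$ depends on $\chi''$; luckily this is exactly the condition for $\chi''*y'$ to be a valid order chain, so the implicit restriction on $\chi''$ in the equation above aligns precisely with the indexing of the $\sim_j$-class of $\chi*y$ in $X_{\ell+1}^{\gamma}$.
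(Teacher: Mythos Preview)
Your proof is correct and takes a genuinely different route from the paper's. The paper expands $\gamma$ all the way to its $m$-representation $\gamma = \sum_{\overline{\chi} \in X_m^\gamma} \lambda_{\overline{\chi}}\,\overline{\chi}$, splits each full chain as $\overline{\chi} = \chi * \chi_0$ with $\chi \in X_\ell^\gamma$, and then applies \ref{shufflethatshit} to the cycle $\zeta$ to obtain the scalar identity $[\gamma:\overline{\chi}]_j = \sum_{\chi' \in [\chi]} \lambda_{\chi'*\chi_0} = 0$ for every tail $\chi_0$; summing these identities over $\chi_0$ gives the claim in a single pass, with no induction on $\ell$. You instead induct on $\ell$, using only the recursive passage from $\ell$- to $(\ell+1)$-representations in \ref{lem:lrepresentation}, the cycle condition $\partial\zeta_\chi = 0$, and the $\PPP$-graded direct-sum decomposition $\Sy_*(\PPP) = \bigoplus_y \Sy_*(\PPP)^{(y)}$; \ref{shufflethatshit} is never invoked. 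What your argument buys is self-containment: it shows that the present lemma follows from the axioms of the synor complex alone, so \ref{shufflethatshit} is not logically needed as a separate step for the later applications. What the paper's argument buys is that, once \ref{shufflethatshit} is isolated, the deduction here is a short direct computation with no inductive bookkeeping and no case split on whether $j = \ell+1$ or $j \le \ell$.
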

\begin{proof}
   We can write $\gamma$ as 
   \begin{align*} 
     \gamma& =\sum_{\overline{\chi} \in X_m^\gamma} \lambda_{\overline{\chi}} \overline{\chi} 
   \end{align*}
   where $\lambda_{\overline{\chi}}$ are scalars.
   We decompose the order chains $\overline{\chi}=(\overline{x_{0}} > \cdots > \overline{x_{m}})\in X^{\gamma}_{m}$ into $\overline{\chi} = \chi*\chi_0$, where $\chi=(\overline{x_{0}} > \cdots > \overline{x_{l}})$ and $\chi_{0}=(\overline{x_{l+1}} > \cdots > \overline{x_{m}})$. We take two such chain decompositions $\overline{\chi} = \chi*\chi_0$, $\overline{\chi}' = \chi'*\chi_0'$  and note that $j\leq \ell$ implies
   $\overline{\chi} \sim_j \overline{\chi}'$ if and only if 
   $\chi \sim_j \chi'$ and $\chi_0 = \chi_0'$.
   
   Now $\gamma = x * \zeta$ and $\zeta$ is a cycle.
   Thus any $\overline{\chi} \in X_m^\gamma$ can be written 
   as $x * \partial_{m}^{(0)}(\overline{\chi})$ where 
   $\partial_{m}^{(0)}(\overline{\chi})= \overline{\chi} \setminus \{x\} \in X_{m-1}^\zeta$.
   
   It follows by \ref{shufflethatshit} that $0 = [\zeta:\partial_{m}^{(0)}(\overline{\chi})]_{j-1} = [\gamma:\overline{\chi}]_{j}$. 
   These facts imply  that for any $\overline{\chi} = \chi * \chi_0 \in X_m^\gamma$ with $\chi \in X_\ell^ \gamma$ we have
   that 
   \begin{align}
   \label{eq:0j} 
   0 & = [\gamma:\overline{\chi}]_j = 
   \sum_{\chi' \in [\chi]\in X_\ell^\gamma/\sim_j} \lambda_{\chi' * \chi_0}.
   \end{align}
   On the other hand $$\zeta_{\chi'} = \sum_{\genfrac{}{}{0pt}{}{\chi_0'}{\chi'*\chi_0' \in X_m^ \gamma}} \lambda_{\chi'*\chi_0'} \chi_0'.$$ 
   It follows that
   \begin{align*} 
     \sum_{\chi' \in [\chi]} \zeta_{\chi'} & = \sum_{\chi' \in [\chi]}\sum_{\genfrac{}{}{0pt}{}{\chi_0'}{\chi'*\chi_0' \in X_m^ \gamma}} \lambda_{\chi'*\chi_0'} \chi_0' \\
     & = \sum_{\chi_0'} \big( \,\sum_{\genfrac{}{}{0pt}{}{\chi' \in [\chi]}{\chi'*\chi_0' \in X_m^ \gamma}} \lambda_{\chi'*\chi_0'}\,\big)\, \chi_0' \\
     & \overset{\text{\eqref{eq:0j}}}{=} 0 
     \end{align*}
\end{proof}
Next, we construct a map $\rho:\redC(\PPP)\rightarrow \Sy(\PPP)$ whose various properties we use freely in the next section. 
\begin{Lemma}\label{ro}
  Let $\PPP$ be a poset.
  For each $k \geq -2$ there is a linear map 
	\[\rho_k:\redC_k(\PPP) \ \longrightarrow\ 
	\mathcal{S}_{k}(\PPP)\]
  such that
  \begin{itemize}
    \item[(R0)] $\rho_{-1}$ sends $\emptyset$ to itself, viewed as a 
		  $(-1)$-synor cycle.
    \item[(R1)] For all $k \geq -1$, we have $\rho_{k-1} \circ \partial_k = 
		\partial_k \circ \rho_{k}$
    \item[(R2)] For all $k \geq 0$ and for all order chains
	$c = (x_0 > \cdots > x_k)$, we have that $\rho_k(c)$ is 
	supported in $\PPP_{\leq x_0}$.
  \end{itemize}
\end{Lemma}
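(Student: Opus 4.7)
The plan is to construct the maps $\rho_k$ by induction on $k$, starting from $\rho_{-2} = 0$ and $\rho_{-1}(\emptyset) = \emptyset$, so that (R0) holds. The geometric input I will use at every step is the following: for any $x_0 \in \PPP$, the poset $\PPP_{\leq x_0}$ has $x_0$ as a maximum, so its order complex is a cone over $x_0$ and hence $\redH_n(\PPP_{\leq x_0}) = 0$ for all $n \geq -1$. Applying (S2) with $\JJJ = \PPP_{\leq x_0}$, the subcomplex $\Sy_*(\PPP)^{\PPP_{\leq x_0}} := \bigoplus_{y \leq x_0} \Sy_*(\PPP)^{(y)}$ has trivial homology in every degree, so every cycle in it is a boundary.

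Now assume $k \geq 0$ and $\rho_{k-1}$ has been defined satisfying (R1) and (R2). Given an order chain $c = (x_0 > x_1 > \cdots > x_k)$, set $z := \rho_{k-1}(\partial_k c)$. The element $z$ is a cycle: by (R1) applied one level lower, $\partial_{k-1} z = \rho_{k-2}(\partial_{k-1}\partial_k c) = 0$. To see that $z$ lies in $\Sy_{k-1}(\PPP)^{\PPP_{\leq x_0}}$, decompose $\partial_k c = \sum_{j=0}^{k}(-1)^j c_j$. For $j \geq 1$ the chain $c_j$ has top element $x_0$, so by (R2) the image $\rho_{k-1}(c_j)$ is supported in $\PPP_{\leq x_0}$; for $j=0$ the chain $c_0 = (x_1 > \cdots > x_k)$ has top element $x_1 \leq x_0$, so $\rho_{k-1}(c_0)$ is supported in $\PPP_{\leq x_1} \subseteq \PPP_{\leq x_0}$. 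Hence $z$ is a cycle in $\Sy_{k-1}(\PPP)^{\PPP_{\leq x_0}}$, and by the key observation there exists $\eta \in \Sy_k(\PPP)^{\PPP_{\leq x_0}}$ with $\partial_k \eta = z$; I define $\rho_k(c) = \eta$ and extend linearly. Properties (R1) and (R2) then hold by construction. The base case $k = 0$ fits the same mold: $\rho_{-1}(\partial_0 c)$ equals the generator of $\Sy_{-1}(\PPP) = \KK$, and surjectivity of $\partial_0$ on $\Sy_0(\PPP)^{\PPP_{\leq x_0}}$ follows from $\redH_{-1}(\PPP_{\leq x_0}) = 0$ via (S2).

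The main obstacle is not the existence of a primitive for $z$ in $\Sy_*(\PPP)$, but rather the necessity of finding one inside the $\PPP_{\leq x_0}$-graded part of the synor complex; this is exactly what (S2), combined with the cone structure of $\PPP_{\leq x_0}$, affords. It is for this reason that property (R2) must be propagated through the induction rather than recovered at the end. The construction involves an arbitrary choice of primitive at each step, but since the lemma only asserts existence, no coherence between these choices is required.
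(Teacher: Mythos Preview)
Your proof is correct and follows essentially the same inductive strategy as the paper's: build $\rho_k$ on order chains by showing $\rho_{k-1}(\partial_k c)$ is a synor cycle supported in $\PPP_{\leq x_0}$, then invoke acyclicity of the synor complex restricted to that cone to choose a primitive. The only cosmetic difference is that you appeal to (S2) directly for the vanishing of $H_*\big(\Sy(\PPP)^{\PPP_{\leq x_0}}\big)$, whereas the paper routes this through \ref{relaxrel}; your treatment is also slightly more explicit about why each face $c_j$ of $\partial_k c$ lands in $\PPP_{\leq x_0}$ under $\rho_{k-1}$, but the substance is identical.
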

\begin{proof}
	We define $\rho_k$ on the basis of $\redC_k(\PPP)$ given by 
  order chains $c = (x_0 > \cdots > x_k)$ 
  and then extend linearly.
  
  By (R0) the map $\rho_{-1}$ is defined. 
	Since $C_{-2}(\PPP)=\Sy_{-2}(\PPP)=0$, 
  the map $\rho_{-2}$ is trivial. 
  By $\partial_{-1} (\emptyset) = 0$ it follows that 
  $\rho_{-2} \circ \partial_{-1} = \partial_{-1} \circ \rho_{-1}$
  and hence (R1) is satisfied for $k=-1$. 
  (R2) is trivially satisfied for $k =-1$. 

  Assume $\rho_k$ has been defined for some $k \geq -1$ and (R1) and (R2) 
  hold for $k$. 
  To define $\rho_{k+1}(c)$ for any order chain 
  $c = (x_{0} > \cdots > x_{k+1})$ 
  we compute $\partial_k \circ \rho_k \circ \partial_{k+1}(c) 
  \overset{\text{(R1)}}{=} \rho_{k-1} \circ \partial_k \circ \partial_{k+1} (c) = 0$.
  Using (R2) it follows that $\rho_k\circ\partial_{k+1}(c)$ is a synor cycle supported on $\PPP_{\leq x_{0}}$. 
  By contractibility of $\PPP_{\leq x_{0}}$ we can use 
  \ref{relaxrel} to set $\rho_{k+1}(c)$ equal to a synor chain in 
  $\PPP_{\leq x_{0}}$ whose boundary is 
  $\rho_k\circ\partial_{k+1}(c)$.
  Now (R1) holds by $\partial_{k+1} \circ \rho_{k+1} (c) = 
  \rho_k\circ \partial_{k+1}(c)$ and linearity and (R2) holds by construction.
\end{proof}

Finally, the next elementary lemma will allow us to prove certain homologous equivalences by taking boundaries. Formally:
\begin{Lemma}
 \label{lem:homologous}
  Let $\PPP$ be a poset with unique maximal element $\hat{1}$.
	Let $\gamma, \gamma' \in \redC_{m}(\PPP)$ where
 $\gamma = \hat{1}*\zeta$ for a cycle $\zeta
	\in \redC_{m-1}(\PPP_{< \hat{1}})$. Then
 the following are equivalent:
 \begin{itemize}
	 \item[(i)] $\gamma$ and $\gamma'$ are homologous in $\redH_{m}(\PPP,\PPP_{< \hat{1}})$.
   \item[(ii)] $\partial_m(\gamma)=\zeta$ and $\partial_m(\gamma')$ are
	   homologous in $\redH_{m-1}(\PPP_{< \hat{1}})$. 
 \end{itemize}
\end{Lemma}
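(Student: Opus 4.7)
The plan is to recognize this as an essentially direct consequence of the long exact sequence of the pair $(\PPP, \PPP_{<\hat{1}})$, combined with the fact that $\PPP$ has a maximum element and is therefore contractible.

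First, I would observe that since $\PPP$ has the unique maximal element $\hat{1}$, the order complex of $\PPP$ is a cone with apex $\hat{1}$, hence contractible. In particular, $\redH_m(\PPP) = 0$ for all $m$. The short exact sequence of chain complexes
\[0 \to \redC_*(\PPP_{<\hat{1}}) \hookrightarrow \redC_*(\PPP) \to \redC_*(\PPP)/\redC_*(\PPP_{<\hat{1}}) \to 0\]
yields the long exact sequence
\[\cdots \to \redH_m(\PPP) \to \redH_m(\PPP,\PPP_{<\hat{1}}) \xrightarrow{\partial_*} \redH_{m-1}(\PPP_{<\hat{1}}) \to \redH_{m-1}(\PPP) \to \cdots\]
and the vanishing of the outer terms forces the connecting homomorphism $\partial_*$ to be an isomorphism.

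Next, I would compute $\partial_m(\gamma)$ explicitly. Since $\gamma = \hat{1}*\zeta$ and $\zeta$ is a cycle in $\redC_{m-1}(\PPP_{<\hat{1}})$, the Leibniz-type rule for the star operation gives
\[\partial_m(\gamma) = \partial_m(\hat{1}*\zeta) = \zeta - \hat{1}*\partial_{m-1}(\zeta) = \zeta,\]
so $\partial_m(\gamma) = \zeta$ as asserted. This also confirms that $\gamma$ is a relative cycle, and under the connecting map $\partial_*$ its class maps to the class of $\zeta$ in $\redH_{m-1}(\PPP_{<\hat{1}})$; the same statement applies to $\gamma'$ (which must likewise be a relative cycle for (i) to be meaningful), whose class maps to $[\partial_m \gamma']$.

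Finally, since $\partial_*$ is an isomorphism, two classes in $\redH_m(\PPP,\PPP_{<\hat{1}})$ agree if and only if their images under $\partial_*$ agree in $\redH_{m-1}(\PPP_{<\hat{1}})$, which translates exactly to the equivalence of (i) and (ii). I do not anticipate any real obstacle: the content is entirely standard homological algebra once one notices the contractibility of $\PPP$, and the only care needed is in writing down the boundary $\partial_m(\hat{1}*\zeta)$ to verify that it coincides with the cycle $\zeta$ to which the connecting map sends $[\gamma]$.
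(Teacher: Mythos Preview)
Your argument is correct and is essentially the same as the paper's, just packaged more conceptually: you invoke the long exact sequence of the pair and use contractibility of $\PPP$ to conclude that the connecting map $\partial_*$ is an isomorphism, whereas the paper carries out the equivalent chain-level computation by hand (writing $\gamma-\gamma'=\partial_{m+1}(c)+\sigma$ and taking boundaries for (i)$\Rightarrow$(ii), and using contractibility to produce a bounding $(m{+}1)$-chain for (ii)$\Rightarrow$(i)). The only point worth a word of care, which you already flag, is that $\gamma'$ must be a relative cycle for either side of the equivalence to be meaningful; once that is noted, your proof goes through cleanly.
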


\begin{proof} 
    \noindent (i) $\Rightarrow$ (ii)

      First notice that $\gamma'\in \redH_{m}(\PPP,\PPP_{<\hat{1}})$ implies that $\partial_{m}(\gamma')$ is supported in $\PPP_{<\hat{1}}$ (whereas the analogous result for $\gamma$ is clear).
      
      Let $c \in \redC_{m+1}(\PPP) \ \text{and} \ \sigma \in \redC_{m}(\PPP_{< \hat{1}}) $ be such that
      $\gamma-\gamma'=\partial_{m+1}(c)+\sigma$.
      Taking boundaries yields $\partial_{m}(\gamma)-\partial_{m}(\gamma')=\partial_{m}(\sigma)$. Since $\partial_{m}(\sigma)$ is a boundary in $\PPP_{<\hat{1}}$, this implies the stated homologous equivalence  in $\redH_{m-1}(\PPP_{<\hat{1}})$.

\noindent (ii) $\Rightarrow$ (i)

	   Let $\sigma \in \redC_{m}(\PPP_{< \hat{1}})$ 
      be such that $ \partial_m(\gamma) - \partial_m(\gamma')=\partial_{m}(\sigma)$.
      Thus, the chain $\gamma-\gamma'-\sigma\in \redC_{m}(\PPP)$ is a cycle. The contractibility of $\PPP$ implies it is also a boundary: There exists some $c\in \redC_{m+1}(\PPP)$ with $\gamma-\gamma'-\sigma=\partial_{m+1}(c)$. Rewriting this as $\gamma-\gamma'=\partial_{m+1}(c)+\sigma$, we get the homologous equivalence in $\redH_{m}(\PPP,\PPP_{<\hat{1}})$.
\end{proof}

\section{Synor representations and the proof of the main theorem}

In this section, we return to the case of a lattice $\LLL$ with maximal element $\hat{1}$ and minimal element $\hat{0}$. We will often work with the associated posets  $\overline{\LLL}=\LLL\setminus \{\hat{0}\}$ and $\check{\LLL}=\LLL\setminus\{\hat{1},\hat{0}\}$. 

In view of \ref{mainthm}, we will assume that the top element $\hat{1}$ is an $m$-synor of $\overline{\LLL}$ for some appropriate $m$. By $\dim_\KK \Sy_m(\overline{\LLL})^{(\hat{1})} = \dim_\KK\redH_{m-1}(\check{\LLL})\neq 0$, this allows us to pick a principal $m$-synor chain $\gamma=1*\zeta\in \Sy_m(\overline{\LLL})^{(\hat{1})}$. We will modify the $\ell$-representations of $\gamma$ in order to extract some innocent-looking but powerful homological information in \ref{lmuff}. In doing so, we will need most of the material developed in previous sections.

We begin with a trivial observation about consecutive representations of $\gamma$.
\begin{Lemma}
\label{lem:ellellm1}
  Let $1 \leq \ell \leq m$ and let $\gamma=\hat{1}*\zeta\in\Sy_m(\overline{\LLL})^{(\hat{1})}$ be a principal $m$-synor chain with $\ell$- and $(\ell-1)$-representations 
  $$\gamma = \sum_{\chi \in X_\ell^ \gamma} \chi * \zeta_\chi =
  \sum_{\chi' \in X_{\ell-1}^ \gamma} \chi' * \zeta_ {\chi'}.$$
  Then
  $$\zeta_{\chi'} = \sum_{ \genfrac{}{}{0pt}{}{\chi \in X_\ell^{\gamma}}{\partial_\ell^ {(\ell)} (\chi) = \chi'}} \min(\chi) * \zeta_\chi.$$
\end{Lemma}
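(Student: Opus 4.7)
The plan is to obtain the identity by rebracketing the given $\ell$-representation into the shape of an $(\ell-1)$-representation and then invoking a simple uniqueness argument. Every $\chi=(x_0>\cdots>x_\ell)\in X_\ell^\gamma$ factors canonically as $\chi=\chi'*(x_\ell)$, where $\chi'=\partial_\ell^{(\ell)}(\chi)=(x_0>\cdots>x_{\ell-1})$ is obtained by dropping $\min(\chi)=x_\ell$. Any length-$m$ extension of $\chi$ in the support of $\gamma$ is simultaneously a length-$m$ extension of $\chi'$, so $\chi' \in X_{\ell-1}^\gamma$, and clearly $\min(\chi)<\min(\chi')$.

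The computation is then to substitute $\chi*\zeta_\chi = \chi' * \min(\chi) * \zeta_\chi$ into the $\ell$-representation and collect terms by $\chi'$. This rewrites
\[
\gamma \;=\; \sum_{\chi \in X_\ell^\gamma} \chi * \zeta_\chi \;=\; \sum_{\chi' \in X_{\ell-1}^\gamma} \chi' * \tilde{\zeta}_{\chi'},
\qquad
\tilde{\zeta}_{\chi'} \;:=\; \sum_{\genfrac{}{}{0pt}{}{\chi\in X_\ell^\gamma}{\partial_\ell^{(\ell)}(\chi)=\chi'}} \min(\chi) * \zeta_\chi.
\]
Because each $\zeta_\chi$ is supported on order chains in $\PPP_{<\min(\chi)} \subseteq \PPP_{<\min(\chi')}$, the chain $\tilde{\zeta}_{\chi'}$ is an element of $\redC_{m-\ell}(\PPP)$ whose support lies in $\PPP_{<\min(\chi')}$; the same support condition is enjoyed by the given $\zeta_{\chi'}$.

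The final step, which I expect to be the only subtle one, is to conclude $\zeta_{\chi'}=\tilde{\zeta}_{\chi'}$. For this I would compare coefficients in the basis of order chains in $\redC_m(\PPP)$: given any decomposition $\gamma=\sum_{\chi'\in X_{\ell-1}^\gamma} \chi'*\eta_{\chi'}$ with each $\eta_{\chi'}$ supported on order chains in $\PPP_{<\min(\chi')}$, the coefficient of an order chain $\chi'*c$ occurs in exactly one summand on the right and equals the coefficient of $c$ in $\eta_{\chi'}$. Hence $\eta_{\chi'}$ is uniquely determined by $\gamma$. Applying this to both the given $(\ell-1)$-representation and the regrouped expression above forces $\zeta_{\chi'}=\tilde{\zeta}_{\chi'}$, proving the lemma. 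The only bookkeeping to verify is the disjointness of the supports of the chains $\chi'*\eta_{\chi'}$ as $\chi'$ varies, which is immediate since the first $\ell$ entries of any such chain recover $\chi'$ exactly.
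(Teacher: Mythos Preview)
Your proof is correct and is precisely the argument one would expect; the paper itself offers no proof, introducing the lemma as ``a trivial observation about consecutive representations of $\gamma$.'' Your rebracketing just inverts the inductive step in the construction of $\ell$-representations in \ref{lem:lrepresentation}, and the uniqueness argument (reading off the initial $\ell$-segment of each basis chain) is exactly why the $\zeta_{\chi'}$ are well defined in the first place, given that $\phi$ is injective.
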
  
The following less trivial result prepares us for \ref{lmuff}:
\begin{Lemma} \label{lem:step}
 Let $\gamma=\hat{1}*\zeta\in\Sy_m(\overline{\LLL})^{(\hat{1})}$ be a principal $m$-synor chain  and for some $1 \leq \ell \leq m$ let  $\gamma = \sum_{\chi \in X_\ell^ \gamma} \chi * \zeta_\chi$ be  its $\ell$-representation. 
  Then $$\sum_{\chi \in X_\ell^ \gamma} \rho_{\ell-1}(\partial_\ell^ {(0)}(\chi)) \shuffle \zeta_\chi$$ and
  $$\sum_{\chi' \in X_{\ell-1}^ \gamma} \rho_{\ell-2}(\partial_{\ell-1}^ {(0)}(\chi')) \shuffle \zeta_{\chi'}$$
  are homologous in $\redH_{m-1}(\check{\LLL})$.
  \end{Lemma}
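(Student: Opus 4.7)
The plan is to exhibit an explicit $m$-dimensional chain $\Phi$ whose boundary is, up to an overall sign, the difference of the two chains in the statement; call these $A_\ell$ and $A_{\ell-1}$. Since every $\chi\in X_\ell^\gamma$ begins with $\hat{1}$, the face $\partial_\ell^{(0)}(\chi)$ has maximum strictly below $\hat{1}$, and $\zeta_\chi$ is supported strictly below $\min(\chi)$. These observations motivate the ansatz
$$\Phi \;=\; \sum_{\chi \in X_\ell^\gamma} \rho_{\ell-1}\bigl(\partial_\ell^{(0)}(\chi)\bigr) \shuffle \bigl(\min(\chi)*\zeta_\chi\bigr),$$
a chain of dimension $(\ell-1)+(m-\ell)+1=m$, supported in $\overline{\LLL}$ by (R2) of \ref{ro}, and landing in $\redC_*(\check{\LLL})$ after the normalization map of \ref{lem:norm}.

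I would then apply the shuffle boundary formula \ref{lem:shuffle0}. Using that $\rho$ is a chain map (property (R1) of \ref{ro}) and that $\partial(\min(\chi)*\zeta_\chi)=\zeta_\chi$ (since $\zeta_\chi$ is a cycle and $\partial(y*\alpha)=\alpha-y*\partial\alpha$), this yields
$$\partial\Phi \;=\; \sum_\chi \rho_{\ell-2}\bigl(\partial\partial_\ell^{(0)}(\chi)\bigr)\shuffle \bigl(\min(\chi)*\zeta_\chi\bigr) \;+\; (-1)^\ell\, A_\ell.$$
Expanding $\partial\partial_\ell^{(0)}(\chi)=\sum_{j=0}^{\ell-1}(-1)^j\partial_{\ell-1}^{(j)}\partial_\ell^{(0)}(\chi)$, the $j$-th summand is $\chi$ with $x_0=\hat{1}$ and $x_{j+1}$ removed, so it is constant on the equivalence class of $\chi$ under $\sim_{j+1}$.

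For $0\le j\le \ell-2$, also $\min(\chi)=x_\ell$ is constant on that equivalence class, so after grouping, \ref{chi} applied at position $1\le j+1\le\ell-1$ forces the inner sum $\sum_{\chi''\sim_{j+1}\chi}\zeta_{\chi''}=0$, killing the block. For $j=\ell-1$ the varying entry is precisely $\min(\chi)$, and \ref{chi} no longer applies; instead \ref{lem:ellellm1} gives $\sum_y y*\zeta_{\chi'*y}=\zeta_{\chi'}$ for $\chi'=\partial_\ell^{(\ell)}(\chi)\in X_{\ell-1}^\gamma$, while $\partial_{\ell-1}^{(\ell-1)}\partial_\ell^{(0)}(\chi)=\partial_{\ell-1}^{(0)}(\chi')$. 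Collecting the $\sim_\ell$-classes yields exactly $(-1)^{\ell-1}A_{\ell-1}$, so
$$\partial\Phi=(-1)^{\ell-1}A_{\ell-1}+(-1)^\ell A_\ell=(-1)^\ell(A_\ell-A_{\ell-1}),$$
proving that $A_\ell$ and $A_{\ell-1}$ are homologous.

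The hard part is guessing the right shape of $\Phi$. A naive first attempt like $\sum_\chi\rho_\ell(\chi)\shuffle\zeta_\chi$ fails on two counts: it involves $\rho_\ell(\chi)$ and so may touch $\hat{1}$, and worse, every grouping in its boundary falls within the range of \ref{chi}, forcing $A_\ell$ itself to be a boundary in the ambient complex without producing any counterpart $A_{\ell-1}$. The key trick is to prepend $\min(\chi)$ to $\zeta_\chi$: this simultaneously shifts support safely below $\hat{1}$ and pulls the index $j=\ell-1$ out of reach of \ref{chi}, which is precisely the opening that \ref{lem:ellellm1} exploits to turn the remaining contribution into $A_{\ell-1}$.
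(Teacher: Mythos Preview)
Your proposal is correct and follows essentially the same approach as the paper: both exhibit the very same chain $\Phi=\sum_{\chi}\rho_{\ell-1}(\partial_\ell^{(0)}(\chi))\shuffle(\min(\chi)*\zeta_\chi)$, apply the shuffle boundary formula together with (R1), kill the summands for $j<\ell-1$ via \ref{chi}, and collapse the $j=\ell-1$ summand to $A_{\ell-1}$ via \ref{lem:ellellm1}. The only substantive omission is that you do not verify that $A_\ell$ and $A_{\ell-1}$ are individually cycles in $\check{\LLL}$ (the paper does this first, by the same grouping argument); your computation only shows their difference is a boundary, which by itself does not yet say they define homology classes.
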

\begin{proof}
We first need to check that both expressions yield cycles in $\redC_{m-1}(\check{\LLL})$. Since the argument is the same in both cases, we confine ourselves to the proof for $\sum_{\chi \in X_\ell^ \gamma} \rho_{\ell-1}(\partial_\ell^ {(0)}(\chi)) \shuffle \zeta_\chi$:

To see that this is supported in $\check{\LLL}$, we may use (R2) of \ref{ro} together with the fact that the shuffle product is supported in joins.

To see that it is a cycle, we first use the boundary formula for the shuffle product, \ref{cor:shuffleorderchain}, together with (R1) from \ref{ro}: \begin{align*}
\partial_{m-1}\big(\,\sum_{\chi \in X_\ell^ \gamma} \rho_{\ell-1}(\partial_\ell^ {(0)}(\chi))  & \shuffle \zeta_\chi\,\big)=\sum_{\chi \in X_\ell^ \gamma} \partial_{\ell-1}\big(\,\rho_{\ell-1}(\partial_\ell^ {(0)}(\chi))\,\big) \shuffle \zeta_\chi\\&\overset{\text{(R1)}}{=}\sum_{\chi \in X_\ell^ \gamma} \rho_{\ell-2}\big(\,\partial_{\ell-1}(\partial_\ell^ {(0)}(\chi))\,\big) \shuffle \zeta_\chi\\&=\sum_{j=0}^{l-1}(-1)^{j}\sum_{\chi \in X_\ell^ \gamma} \rho_{\ell-2}\big(\,\partial_{\ell-1}^{(j)}(\partial_\ell^ {(0)}(\chi))\,\big) \shuffle \zeta_\chi\end{align*}
We claim that for each $j$, the corresponding term $\displaystyle{\sum_{\chi \in X_\ell^ \gamma}} \rho_{\ell-2}\big(\,\partial_{\ell-1}^{(j)}(\partial_\ell^ {(0)}(\chi))\,\big) \shuffle \zeta_\chi$ vanishes.
Indeed, 
\begin{align*} 
  \sum_{\chi \in X_\ell^ \gamma} \rho_{\ell-2} & \big(\,\partial_{\ell-1}^{(j)}(\partial_\ell^ {(0)}(\chi))\,\big) \shuffle \zeta_\chi \\
  & =\sum_{[\chi] \in X_\ell^\gamma/\sim_{j+1}} 
  \Big( \,\rho_{\ell-2}\big(\,\partial_{\ell-1}^{(j)}(\partial_\ell^{(0)}(\chi)) \,\big)
  \shuffle  \sum_{\chi' \in [\chi]} \zeta_{\chi'}\Big) =0,\end{align*} where in the last step we used \ref{chi}. Importantly, $1\leq j+1$ makes the application of the Lemma legitimate.\medskip

  The proof of the homologous equivalence is similar:
  
  Clearly,
  \begin{align}
  \nonumber \sum_{\chi \in X_\ell^ \gamma} \rho_{\ell-1}(\partial_\ell^ {(0)}(\chi)) \shuffle \zeta_\chi & = \, \sum_{\chi \in X_\ell^ \gamma} \rho_{\ell-1}(\partial_\ell^ {(0)}(\chi)) \shuffle \partial_{m-\ell}\big(\,\min(\chi) * \zeta_\chi \,\big)\\
  \label{eq:twoterm} = & \,(-1)^{\ell}\partial_m\big(\,\sum_{\chi \in X_\ell^\gamma} 
  \rho_{\ell-1}\big(\,\partial_\ell^{(0)} (\chi) \,\big) \shuffle (\min(\chi) * \zeta_\chi)\,\big) + \\
  &\label{eq:twoterms} \,(-1)^{\ell+1}\sum_{\chi \in X_\ell^\gamma} 
  \partial_{\ell-1}\big(\, \rho_{\ell-1}(\partial_\ell^{(0)}(\chi))\, \big) \shuffle (\min(\chi) * \zeta_\chi)
  \end{align}
  Arguing as before, we get that the first term on the right hand side of
  \eqref{eq:twoterm} is a boundary in $\check{\LLL}$. 
  Now consider the second term \eqref{eq:twoterms}: We have
  \begin{align*} 
  & (-1)^{\ell+1}\sum_{\chi \in X_\ell^\gamma} 
  \partial_{\ell-1}\big(\,\rho_{\ell-1}(\partial_\ell^{(0)}(\chi))\,\big) \shuffle (\min(\chi)* \zeta_\chi) \\
  & \overset{\text{(R1)}}{=}(-1)^{\ell+1}\sum_{\chi \in X_\ell^\gamma} 
  \rho_{\ell-2}\big(\,\partial_{\ell-1}(\partial_\ell^{(0)}(\chi))\,\big) \shuffle (\min(\chi) * \zeta_\chi) \\
  & =\sum_{j=0}^{\ell-1} (-1)^{\ell+1+j} \sum_{\chi \in X_\ell^\gamma} 
  \rho_{\ell-2}\big(\,\partial_{\ell-1}^{(j)}(\partial_\ell^{(0)}(\chi))\, \big) \shuffle (\min(\chi)*\zeta_\chi).
  \end{align*}
  
  We treat the summands for $j < \ell-1$ and $j = \ell-1$ 
  separately. 
  
  For $j < \ell-1$, each term vanishes as before, since we can again make the terms $\sum_{\chi'\in[\chi]}\zeta_{\chi'}$ appear and apply \ref{chi}.

For $j = \ell-1$, we obtain by \ref{lem:ellellm1} that
\begin{align*} (-1)^{\ell+1+\ell-1}\sum_{\chi \in X_\ell^\gamma} 
  \rho_{\ell-2}\big(\,\partial_{\ell-1}^{(\ell-1)}(\partial_\ell^{(0)}(\chi))\,\big) & \shuffle (\min(\chi) * \zeta_\chi) \\ = \sum_{\chi' \in X_{\ell-1}^\gamma} \rho_{\ell-2}(\partial_{\ell-1}^{(0)}(\chi')) \shuffle \zeta_{\chi'}.
  \end{align*}
  Thus, we have managed to express the difference of cycles $$\sum_{\chi \in X_{\ell}^\gamma} \rho_{\ell-1}(\partial_{\ell}^{(0)}(\chi)) \shuffle \zeta_{\chi} -\sum_{\chi' \in X_{\ell-1}^\gamma} \rho_{\ell-2}(\partial_{\ell-1}^{(0)}(\chi')) \shuffle \zeta_{\chi'}$$ as a term \ref{eq:twoterm}, which is a boundary in $\check{\LLL}$.

\end{proof}
Now to the relative version:
\begin{Proposition}\label{lmuff}
  Let $\gamma = \hat{1} * \zeta\in\Sy_m(\overline{\LLL})^{(\hat{1})}$ be a principal $m$-synor chain 
  and for some $1 \leq \ell \leq m$ let 
  $\gamma\, =\ \sum_{\chi \in X_\ell^\gamma} \chi*\zeta_\chi$ 
  be its $\ell$-representation.

  Then $\gamma$ is homologous to 
  $$\sum_{\chi \in X_\ell^\gamma} \rho_\ell(\chi) \shuffle \zeta_\chi$$ 
  in $\redH_{m}(\overline{\LLL},\check{\LLL})$.
\end{Proposition}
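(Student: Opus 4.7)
The plan is to apply \ref{lem:homologous} to $\gamma$ and the candidate chain $B_\ell := \sum_{\chi \in X_\ell^\gamma} \rho_\ell(\chi) \shuffle \zeta_\chi$. For this I need to verify that $\partial_m(B_\ell)$ is supported in $\check{\LLL}$ (so that $B_\ell$ is a legitimate relative cycle) and that $\partial_m(B_\ell)$ and $\partial_m(\gamma)=\zeta$ are homologous in $\redH_{m-1}(\check{\LLL})$.

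First I would compute the boundary $\partial_m(B_\ell)$. Using the shuffle boundary formula \ref{lem:shuffle0} together with the fact that each $\zeta_\chi$ is a cycle, the mixed term drops out and
\[
\partial_m(B_\ell) \;=\; \sum_{\chi \in X_\ell^\gamma} \partial_\ell(\rho_\ell(\chi)) \shuffle \zeta_\chi \;=\; \sum_{\chi \in X_\ell^\gamma} \rho_{\ell-1}(\partial_\ell(\chi)) \shuffle \zeta_\chi
\]
by property (R1) of \ref{ro}. Expanding $\partial_\ell = \sum_{j=0}^\ell (-1)^j \partial_\ell^{(j)}$ and, for each fixed $j \geq 1$, grouping the inner sum over $\chi$ by the $\sim_j$-equivalence classes on $X_\ell^\gamma$, I would observe that $\partial_\ell^{(j)}$ is constant on each such class; hence the $j$-th contribution equals $\sum_{[\chi]} \rho_{\ell-1}(\partial_\ell^{(j)}(\chi)) \shuffle \sum_{\chi' \in [\chi]} \zeta_{\chi'}$, which vanishes by \ref{chi}, exactly as in the proof of \ref{lem:step}. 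Only the $j=0$ term survives, and it coincides with the cycle $A_\ell := \sum_\chi \rho_{\ell-1}(\partial_\ell^{(0)}(\chi)) \shuffle \zeta_\chi$ of \ref{lem:step}, which that lemma already shows is supported in $\check{\LLL}$. Since $\rho_\ell(\chi)$ is supported in $\overline{\LLL}_{\leq \max(\chi)} \subseteq \overline{\LLL}$ and the shuffle of chains in $\overline{\LLL}$ lands in $\overline{\LLL}$, we also have $B_\ell \in \redC_m(\overline{\LLL})$, so the relative cycle condition holds.

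For the second step I would iterate \ref{lem:step} to obtain $A_\ell \sim A_{\ell-1} \sim \cdots \sim A_1 \sim A_0$ in $\redH_{m-1}(\check{\LLL})$ and then evaluate $A_0$ directly. The $0$-representation of $\gamma$ is $\gamma = \hat{1} * \zeta$ with $X_0^\gamma = \{(\hat{1})\}$ and $\zeta_{(\hat{1})} = \zeta$, so $A_0 = \rho_{-1}(\partial_0^{(0)}((\hat{1}))) \shuffle \zeta = \rho_{-1}(\emptyset) \shuffle \zeta$. By (R0) of \ref{ro}, $\rho_{-1}(\emptyset)=\emptyset$, and by the convention $\emptyset \shuffle \zeta = \zeta$ recorded right after the definition of the shuffle operator, we conclude $A_0 = \zeta = \partial_m(\gamma)$. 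Applying \ref{lem:homologous} in the direction (ii)$\Rightarrow$(i) then yields the desired homologous equivalence $\gamma \sim B_\ell$ in $\redH_m(\overline{\LLL},\check{\LLL})$.

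The main technical point is the cancellation of the $j \geq 1$ terms in the boundary computation; it rests crucially on \ref{chi} and hence on the synor vanishing \ref{shufflethatshit} from \ref{sec:synor}. Once that cancellation is granted, the proof is essentially an assembly of \ref{lem:step}, properties (R0)--(R1) of \ref{ro}, the boundary formula \ref{lem:shuffle0}, and \ref{lem:homologous}, together with the trivial identity $\emptyset \shuffle \zeta = \zeta$.
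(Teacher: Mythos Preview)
Your proposal is correct and follows essentially the same route as the paper: reduce via \ref{lem:homologous} to showing $\partial_m(B_\ell)\sim\zeta$ in $\redH_{m-1}(\check{\LLL})$, compute $\partial_m(B_\ell)$ using \ref{lem:shuffle0} and (R1), kill the $j\geq 1$ terms by \ref{chi}, identify the surviving $j=0$ term as $A_\ell$, and then descend via \ref{lem:step} to $A_0=\zeta$ using (R0) and the convention $\emptyset\shuffle\zeta=\zeta$. The only cosmetic difference is that the paper verifies the support of $\partial_m(B_\ell)$ in $\check{\LLL}$ directly from (R2), whereas you defer to the proof of \ref{lem:step}; both are fine.
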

\begin{proof}
  Let $\gamma' = \sum_{\chi \in X_\ell^\gamma} \rho_\ell(\chi) \shuffle \zeta_\chi$.
  By \ref{lem:homologous} it suffices to show that $\partial_m(\gamma)$ and
  $\partial_m(\gamma')$ are homologous as cycles in $\redH_{m-1}(\check{\LLL})$. We first check that both boundary terms are supported in $\check{\LLL}$.
  
  For $\partial_m(\gamma) = \zeta$ this is clear.  
  
  For $\partial_{m}(\gamma')$ we follow a similar route to the proof of \ref{lem:step}: We compute
  \begin{eqnarray*}
\partial_{m}\big(\,\sum_{\chi \in X_\ell^ \gamma} \rho_{\ell}(\chi) \shuffle \zeta_\chi\,\big)&=&\sum_{\chi \in X_\ell^ \gamma} \partial_{\ell}(\rho_{\ell}(\chi)) \shuffle \zeta_\chi\\&\overset{\text{(R1)}}{=}&\sum_{\chi \in X_\ell^ \gamma} \rho_{\ell-1}(\partial_{\ell}(\chi)) \shuffle \zeta_\chi\\&=&\sum_{j=0}^{\ell}(-1)^{j}\sum_{\chi \in X_\ell^ \gamma} \rho_{\ell-1}(\partial_{\ell}^{(j)}(\chi)) \shuffle \zeta_\chi.\end{eqnarray*}

For each $j>0$, the corresponding term $$\sum_{\chi \in X_\ell^ \gamma} \rho_{\ell-1}(\partial_{\ell}^{(j)}(\chi)) \shuffle \zeta_\chi=\sum_{[\chi] \in X_\ell^\gamma/\sim_{j}} 
  \Big( \,\rho_{\ell-1}\big(\,(\partial_\ell^{(j)}(\chi)) \,\big)
  \shuffle  \sum_{\chi' \in [\chi]} \zeta_{\chi'}\Big) $$ vanishes by \ref{chi}.

  Then the remaining term $\sum_{\chi \in X_\ell^ \gamma} \rho_{\ell-1}(\partial_{\ell}^{(0)}(\chi)) \shuffle \zeta_\chi$ is supported in $\check{\LLL}$ by (R2) of \ref{ro}.
  \medskip
  
  Now let us prove the homologous equivalence.
  The above argument shows that $$\partial_{m}(\gamma')=\sum_{\chi \in X_\ell^ \gamma} \rho_{\ell-1}(\partial_{\ell}^{(0)}(\chi)) \shuffle \zeta_\chi.$$
  Applying \ref{lem:step} iteratively, namely for $\ell'=\ell,\ell-1,\ldots,1$, we get that the terms $$\sum_{\chi \in X_{\ell-s}^ \gamma} \rho_{\ell-s-1}(\partial_{\ell-s}^{(0)}(\chi)) \shuffle \zeta_{\chi}$$ belong to the same homology class in $\redH_{m-1}(\check{\LLL})$, for $s=0,1,\ldots,\ell$. 
  Observe that the term corresponding to $s=0$ is precisely $\partial_{m}(\gamma')$, whereas the term corresponding to $s=\ell$ is $\partial_{m}(\gamma)$:
  This is because, for $s=\ell$ we have $X_{\ell-s}^ \gamma=X_{0}^ \gamma=\{\hat{1}\}$, $\zeta_{\hat{1}}=\zeta$ and so by (R0) of \ref{ro}: $$\sum_{\chi \in X_{\ell-s}^ \gamma} \rho_{\ell-s-1}(\partial_{\ell-s}^{(0)}(\chi)) \shuffle \zeta_{\chi}= \rho_{-1}(\partial_{0}^{(0)}(\hat{1})) \shuffle \zeta_{\hat{1}}=\rho_{-1}(\emptyset)\shuffle \zeta = \zeta = \partial_{m}(\gamma).$$ This of course proves the stated homologous equivalence.


%



\end{proof}

We are now in position to formulate and prove our main result on
lattice homology.

\begin{Theorem} \label{mainthm}
  Let $\LLL$ be a lattice and let $1\leq i_1,i_2$ and $0\leq k\leq\min\{i_1,i_2\}$ be integers.
  If $\hat{1} \in \overline{\LLL}$ is an 
  $(i_1 + i_2 - k - 1)$-synor, then there is an $(i_1-1)$-synor $x$ of $\overline{\LLL}$  and
  an $(i_2-1)$-synor $y$ of $\overline{\LLL}$ such that $\hat{1} = x \vee y$.
\end{Theorem}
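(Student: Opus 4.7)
The plan is to extract the claimed synors by applying the shuffle decomposition of \ref{lmuff} to a principal synor chain representing $\hat{1}$ as an $m$-synor, with $m = i_1+i_2-k-1$. From the hypothesis and (S1) one selects a nonzero $\gamma = \hat{1}\ast\zeta \in \Sy_m(\overline{\LLL})^{(\hat{1})}$; via (S2) combined with \ref{relaxrel} this $\gamma$ represents a nontrivial class in $\redH_m(\overline{\LLL},\check{\LLL})$.

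First I would apply \ref{lmuff} with $\ell = i_1-1$ (the cases $i_1 = 1$ or $k=i_1$ are handled directly by working with the $0$-representation). This yields
\[
\gamma\;\sim\;\sum_{\chi\in X_\ell^\gamma}\rho_\ell(\chi)\shuffle\zeta_\chi
\qquad\text{in } \redH_m(\overline{\LLL},\check{\LLL}).
\]
Decomposing each summand by synor grading as $\rho_\ell(\chi)=\sum_z \alpha_z$ with $\alpha_z\in\Sy_\ell(\overline{\LLL})^{(z)}$ and $\zeta_\chi=\sum_y\beta_y$ with $\beta_y\in\Sy_{m-\ell-1}(\overline{\LLL})^{(y)}$, each elementary shuffle $\alpha_z\shuffle\beta_y$ is supported on order chains whose top equals $z\vee y$. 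Hence nontriviality of the sum modulo $\check{\LLL}$ forces a triple $(\chi, z, y)$ with $z \vee y = \hat{1}$ and $\alpha_z,\beta_y \neq 0$, producing an $(i_1-1)$-synor $z$ and an $(i_2-k-1)$-synor $y$ of $\overline{\LLL}$ with $z \vee y = \hat{1}$. For $k=0$ the pair $(z,y)$ already satisfies the theorem, recovering \ref{mainstart}.

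For $k \geq 1$ the main obstacle is to upgrade $y$ to a genuine $(i_2-1)$-synor while preserving the join with $z$. My plan is to exploit the chain $\chi = (\hat{1}=x_0 > x_1 > \cdots > x_\ell)$ supporting the nontrivial shuffle: for each $0 \leq j \leq \ell$ the prefix $\chi|_j = (x_0,\ldots,x_j)$ lies in $X_j^\gamma$, and \ref{lem:lrepresentation} provides a cycle $\zeta_{\chi|_j}$ of synor degree $m-j-1$ supported strictly below $x_j$, with the passage between $\ell$- and $j$-representations governed by \ref{lem:ellellm1}. Choosing $j = i_1-k$ gives $m-j-1 = i_2-2$, so if $\zeta_{\chi|_j}$ can be shown to represent a nontrivial class in $\redH_{i_2-2}((\hat{0},x_j))$, then $x_j$ is an $(i_2-1)$-synor, and combined with $x_j \geq x_\ell > y$ one gets $z \vee x_j = \hat{1}$, making $(z,x_j)$ the desired pair. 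The hardest step will be proving nontriviality of the class of $\zeta_{\chi|_j}$, for which I would transport the nontriviality of $\gamma$ down through the prefix structure using (S2), \ref{relaxrel} and the boundary identity \ref{cor:shuffleorderchain}, most likely in an inductive argument on $k$.
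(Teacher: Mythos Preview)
Your overall strategy matches the paper's: apply \ref{lmuff} with $\ell=i_1-1$, extract from a surviving shuffle summand an $(i_1-1)$-synor $z$ and an $(i_2-k-1)$-synor $y$ with $z\vee y=\hat 1$, and for $k\ge 1$ replace $y$ by the entry $x_j$ of $\chi$ at position $j=i_1-k$. The divergence is in how you certify that $x_j$ is an $(i_2-1)$-synor. You propose proving that the cycle $\zeta_{\chi|_j}$ represents a nontrivial class in $\redH_{i_2-2}\big((\hat 0,x_j)\big)$ and flag this as the hardest step. In fact this step is \emph{free}, and the route you sketch need not succeed: a nonzero synor cycle supported below $x_j$ is not forced to be homologically nontrivial there, so an induction on $k$ transporting nontriviality of $\gamma$ down through prefixes is aiming at a statement that may simply be false for a given $\chi$.

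The paper's observation is that the $s$-th entry of \emph{any} $\chi\in X_\ell^\gamma$ is automatically an $(m-s)$-synor. This follows directly from (S1) together with the inductive construction in \ref{lem:lrepresentation}: in passing from the $(s-1)$- to the $s$-representation, the element $x_s=\min(\chi|_s)$ is the grading of a nonzero principal synor chain in $\Sy_{m-s}(\overline{\LLL})^{(x_s)}$, and (S1) says this space is nonzero only when $x_s$ is an $(m-s)$-synor. Taking $m=i_1+i_2-k-1$ and $s=i_1-k$ (legitimate since $1\le k\le i_1$ puts $s$ in $\{0,\ldots,\ell\}$) gives that $x_{i_1-k}$ is an $(i_2-1)$-synor immediately, with no induction on $k$, no boundary identities, and no appeal to nontriviality of $\zeta_{\chi|_j}$. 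Your monotonicity step $x_j\ge x_\ell> y\Rightarrow z\vee x_j=\hat 1$ then finishes exactly as in the paper.
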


\begin{proof}
  Since $\hat{1}$ is an $(i_1 + i_2  - k -1)$-synor, we have $\redH_{i_{1}+i_{2}-k-1}(\overline{\LLL},\check{\LLL})\cong \redH_{i_{1}+i_{2}-k-2}(\check{\LLL}) \neq 0.$ By \ref{relaxrel} we deduce $H_{i_{1}+i_{2}-k-1}(\Sy(\overline{\LLL}),\Sy(\check{\LLL}))\neq 0$, so we may choose a principal synor chain $\gamma=\hat{1}*\zeta\in\Sy_{i_{1}+i_{2}-k-1}(\overline{\LLL})^{(\hat{1})}$ which represents a nontrivial homology class in $H_{i_{1}+i_{2}-k-1}(\Sy(\overline{\LLL}),\Sy(\check{\LLL}))$. \ref{relaxrel} (for the inclusion map) implies that $\gamma$, viewed as a cycle in $\redH_{i_{1}+i_{2}-k-1}(\overline{\LLL},\check{\LLL})$ is also nontrivial.
  
  Let $$\gamma=\sum_{\chi \in X_{i_1-1}^\gamma} \chi * \zeta_\chi$$
    be the $(i_1-1)$-representation of $\gamma$. Note that to carry out this representation legitimately, we need $k\leq i_{2}$, for otherwise the terms $\zeta_{\chi}$ do not make sense.
    By \ref{lmuff},  $\gamma$ is homologous to 
    \begin{equation}\label{firstnontr}\sum_{\chi \in X_{i_1-1}^\gamma} \rho_{i_1-1}(\chi) \shuffle \zeta_\chi\end{equation} in $\redH_{i_{1}+i_{2}-k-1}(\overline{\LLL},\check{\LLL})$.
    
    Since $\gamma$ is nontrivial, it follows that \ref{firstnontr} must also represent a nonzero element in $\redH_{i_{1}+i_{2}-k-1}(\overline{\LLL},\check{\LLL})$.
    Thus there must be an order chain $\chi \in X_{i_1-1}^\gamma$ such that $\hat{1}$ appears in the summand 
    $\rho_{i_1-1}(\chi) \shuffle \zeta_\chi$ of \ref{firstnontr}. 
    
    By $\rho_{i_1-1}(\chi) \in \Sy_{i_1-1}(\overline{\LLL})$ of \ref{ro} we know that the maximal elements of order chains appearing in $\rho_{i_1-1}(\chi)$ are $(i_{1}-1)$-synors. Analogously, the maximal elements of order chains appearing in $\zeta_\chi$ are $(i_2-k-1)$-synors. Since the computation of the shuffle product $\rho_{i_1-1}(\chi) \shuffle \zeta_\chi$ requires taking joins, there must be an $(i_1-1)$-synor $x$ appearing in $\rho_{i_1-1}(\chi)$ and an $(i_2-k-1)$-synor $y$ appearing in $\zeta_\chi$ such that
    $x \vee y = \hat{1}$.

    We are almost finished: We just need to find an $(i_{2}-1)$-synor $z$ that satisfies $y\leq z$; then, $x\vee z=\hat{1}$ will be the required decomposition by the monotonicity of the join. In the $k=0$ case, we can simply choose $z=y$. Otherwise, we look into $\chi$: the whole of $\chi$ lies above $\zeta_{\chi}$ and thus above $y$. By construction the $s$-th element of $\chi$, for $s=0,\ldots,i_{1}-1$, is an $(i_{1}+i_{2}-k-1-s)$-synor. Thus we can find a $t$-synor in $\chi$ for every $i_{2}-k \leq t\leq i_{1}+i_{2}-k-1$. As $1\leq k\leq i_{1}$, the number $i_{2}-1$ falls in this interval, and hence the corresponding $(i_{2}-1)$-synor $z$ appearing in $\chi$ yields the desired element.
    \end{proof}

    The proof of \ref{mainstart} is now immediate.

    \begin{proof}[Proof of \ref{mainstart}:]
       Apply \ref{mainthm} in the case $k = 0$.
       \end{proof}    

\section{Applications}
\label{sec:further}

Let us give two more applications of \ref{mainthm} to 
the Betti table $\beta_{ij}(S/\mathfrak{I})$ of a monomial ideal
$\mathfrak{I}$.
First we introduce multigraded Betti numbers
$\beta_{i,m}(S/\mathfrak{I}) = \dim_\KK \mathrm{Tor}^S_i(S/\mathfrak{I},\KK)_m$ 
for $m \in \overline{\lcm(\mathfrak{I})}$. 
Recall that by \cite{GPW} we have
$\beta_{i,m} (S/\mathfrak{I}) = \dim_\KK \redH_{i-2}\big(\,(1,m)\,)$.
We set $$a_i(S/\mathfrak{I}) = \big| \{ m ~|~\beta_{i,m}(S/\mathfrak{I}) \neq 0 \} \big|.$$

\begin{Theorem} \label{thm:ksubad}
    Let $\mathfrak{I}$ be a monomial ideal in $S$ and $i_1,i_2,k$ integers such that $i_1,i_2 \geq 1$ and $0 \leq k \leq \min\{i_1,i_2\}$. Let $m \in \lcm(\mathfrak{I})$ be such that 
    $$\beta_{i_1+i_2-k,m}(S/\mathfrak{I}) = \dim_\KK \redH_{i_1+i_2-k-2} \big( \, (1,m)\,\big) \neq 0.$$
    Then, there exist $n_1,n_2 \leq m$ in $\lcm(\mathfrak{I})$ such that $m = \lcm(n_1,n_2)$ with $\beta_{i_1,n_1}(S/\mathfrak{I})
    \neq 0$ and $\beta_{i_2,n_2}(S/\mathfrak{I}) \neq 0$.

    In particular, we have 

    \begin{itemize}
        \item[(i)] for all $i_1,i_2 \geq 0$ and $0 \leq k \leq \min\{i_1,i_2\}$,
    $$t_{i_1 + i_2 - k}(S/\mathfrak{I}) \leq t_{i_1}(S/\mathfrak{I})+ t_{i_2}(S/\mathfrak{I}),$$
       \item[(ii)] and for all $i_1,i_2 \geq 0$, $$a_{i_1 + i_2}(S/\mathfrak{I})  \leq a_{i_1}(S/\mathfrak{I}) \cdot  a_{i_2}(S/\mathfrak{I}).$$
       \end{itemize}
    
\end{Theorem}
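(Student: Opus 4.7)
The plan is to reduce both parts to \ref{mainthm} applied to interval sublattices of $\lcm(\mathfrak{I})$, using the dictionary \eqref{eq:lcm} between homology of open intervals and multigraded Betti numbers as translation.

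Assume $1 \leq i_1, i_2$, as otherwise part (i) is trivial since $t_0(S/\mathfrak{I}) = 0$. If $t_{i_1+i_2-k}(S/\mathfrak{I}) = 0$ there is nothing to prove, so fix a monomial $m$ of degree $t_{i_1+i_2-k}(S/\mathfrak{I})$ with $\beta_{i_1+i_2-k, m}(S/\mathfrak{I}) \neq 0$. By \eqref{eq:lcm} we have $\redH_{i_1+i_2-k-2}\big(\,(1, m)\,\big) \neq 0$. Consider the interval sublattice $\LLL := [1, m] \subseteq \lcm(\mathfrak{I})$, whose minimum is $\hat{0} = 1$ and whose maximum is $\hat{1} = m$; its open interval $(\hat{0}, \hat{1})$ coincides with $(1, m)$ viewed as a subposet of $\lcm(\mathfrak{I})$, so the hypothesis of \ref{mainthm} holds for $\LLL$ with the given $i_1, i_2, k$. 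Applying \ref{mainthm} produces elements $n_1, n_2 \in \overline{\LLL}$ (so in particular $n_1, n_2 < m$) with $n_1 \vee n_2 = m$, $\redH_{i_1-2}\big(\,(1, n_1)\,\big) \neq 0$, and $\redH_{i_2-2}\big(\,(1, n_2)\,\big) \neq 0$. Translating back through \eqref{eq:lcm} yields $\beta_{i_1, n_1}(S/\mathfrak{I}) \neq 0$ and $\beta_{i_2, n_2}(S/\mathfrak{I}) \neq 0$, hence $\deg n_1 \leq t_{i_1}(S/\mathfrak{I})$ and $\deg n_2 \leq t_{i_2}(S/\mathfrak{I})$, which establishes the main multigraded assertion.

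Part (i) then follows from
$$t_{i_1+i_2-k}(S/\mathfrak{I}) = \deg m = \deg \lcm(n_1, n_2) \leq \deg n_1 + \deg n_2 \leq t_{i_1}(S/\mathfrak{I}) + t_{i_2}(S/\mathfrak{I}).$$
For part (ii) specialize to $k = 0$. For each monomial $m$ with $\beta_{i_1+i_2, m}(S/\mathfrak{I}) \neq 0$, the construction above produces a pair $(n_1, n_2)$ of multigraded shifts in homological degrees $i_1$ and $i_2$ with $m = \lcm(n_1, n_2)$. Fixing such a choice for every such $m$ defines a map from $\{m : \beta_{i_1+i_2, m}(S/\mathfrak{I}) \neq 0\}$ into $\{n : \beta_{i_1, n}(S/\mathfrak{I}) \neq 0\} \times \{n : \beta_{i_2, n}(S/\mathfrak{I}) \neq 0\}$; since $m$ is recovered from its image pair as the lcm, this map is injective, so $a_{i_1+i_2}(S/\mathfrak{I}) \leq a_{i_1}(S/\mathfrak{I}) \cdot a_{i_2}(S/\mathfrak{I})$.

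No step presents a real obstacle once \ref{mainthm} is granted; the remaining content is the dictionary translation via \eqref{eq:lcm} and the observation that $m = \lcm(n_1, n_2)$ makes the assignment $m \mapsto (n_1, n_2)$ invertible on its image, yielding (ii).
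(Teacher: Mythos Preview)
Your proof is correct and follows essentially the same route as the paper: apply \ref{mainthm} to the interval $[1,m]$, translate synors back to nonvanishing multigraded Betti numbers via \eqref{eq:lcm}, deduce (i) from $\deg \lcm(n_1,n_2)\le \deg n_1+\deg n_2$, and obtain (ii) for $k=0$ from the observation that $m=\lcm(n_1,n_2)$ is determined by the pair. The only cosmetic difference is that you phrase (ii) as injectivity of $m\mapsto(n_1,n_2)$ while the paper phrases it dually; your parenthetical ``$n_1,n_2<m$'' is not needed (the statement only requires $n_1,n_2\le m$), but it does no harm.
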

\begin{proof}
    By assumption, $m \in \lcm(\mathfrak{I})$ is an $(i_1+i_2-k-1)$-synor. Apply \ref{mainthm} to $\LLL = [1,m]$ 
    to obtain an $(i_1-1)$-synor $n_1$ and an $(i_2-1)$-synor $n_2$ in $\LLL$ such that $m = n_1 \vee n_2 = \lcm(n_1,n_2)$. Thus
    $\beta_{i_1,n_1}(S/\mathfrak{I}) = \dim_\KK \redH_{i_1-2}\big(\,(1,n_1) \,) \neq 0$ and $\beta_{i_2,n_2}(S/\mathfrak{I}) = \dim_\KK \redH_{i_2-2}\big(\,(1,n_2) \,) \neq 0$. This concludes the first part of the proof. 

    It remains to verify (i) and (ii). If $i_1=0$ or $i_2=0$, then both statements are trivial. Assume $1 \leq i_1,i_2$ and $t_{i_1 + i_2 - k}(S/\mathfrak{I}) > 0$.
    For assertion (i), choose an $m$ realizing the maximal shift in homological degree $i_1 + i_2 - k$, so that $\deg (m) = t_{i_1 + i_2 - k}(S/\mathfrak{I})$, and observe that $\deg(n_1) \leq t_{i_1}(S/\mathfrak{I})$ and $\deg(n_2) \leq t_{i_2}(S/\mathfrak{I})$.
    This concludes the proof of (i) by 
    \begin{align*} 
      t_{i_1 + i_2 - k}(S/\mathfrak{I}) & = \deg(m) = \deg(\lcm(n_1,n_2)) 
    \leq \deg(n_1) + \deg(n_2) \\ & \leq t_{i_1}(S/\mathfrak{I})+t_{i_2}(S/\mathfrak{I}).
    \end{align*}

    For assertion (ii) set $k = 0$. Then observe that a pair
    $(n_1,n_2)$ for which $\beta_{i_1,n_1}(S/\mathfrak{I}) \neq 0$
    and
    $\beta_{i_2,n_2}(S/\mathfrak{I}) \neq 0$ can only contribute
    the single $m = \lcm(n_1,n_2)$ to the count of $a_{i_1+i_2}(S/\mathfrak{I})$. 
    Thus we obtain the desired inequality $ a_{i_1+i_2}(S/\mathfrak{I})\leq a_{i_1}(S/\mathfrak{I})\cdot  a_{i_2}(S/\mathfrak{I})$.

\end{proof}

Now  \ref{thm:subad} becomes a special case of  \ref{thm:ksubad}(i).

\begin{proof}[Proof of \ref{thm:subad}:]
Apply \ref{thm:ksubad}(i) in the case $k = 0$. 
\end{proof}

We note that subadditivity is a sharp inequality. For example, for a natural number $a \geq 1$ the ideals 
$\mathfrak{I}_a = \big(\,x_1^{a}, \ldots, x_n^{a}\,\big)$ in $S$ 
satisfy $t_k(S/\mathfrak{I}_a) = ak$ for all $0 \leq k \leq \text{pd}(S/\mathfrak{I}_a)$.

On the other hand the following example shows that \ref{thm:ksubad}(i) is
indeed stronger than \ref{thm:subad}.

\begin{Example}
   Consider the ideal
   $I= \big(af,bf,cf,df,ef,abcde)$ in the polynomial ring
   $\KK[a,b,c,d,e,f]$. Using Macaulay2, we get the following Betti table (which can also be easily verified using \eqref{eq:lcm})

   \bigskip
   
   \begin{center}
   \begin{tabular}{rcccccc}
            & 0 & 1 & 2 & 3 & 4 & 5 \\
o5 = total: & 1 & 6 & 11 & 10 & 5 & 1 \\
         0: & 1 & . & . & . & . & . \\
         1: & . & 5 & 10 & 10 & 5 & 1 \\
         2: & . & . & .  &. & . & . \\
         3: & . & . & .  &. &. & .\\
         4: & . & 1 & 1  & . & . & .
         \end{tabular}
         \end{center}

         \bigskip
         
   from which we obtain
   $t_{0}=0$, $t_{1}=5$, $t_{2}=6$, $t_{3}=4$, $t_{4}=5$, $t_{5}=6$.
   \ref{thm:subad} for $i_1+i_2 = 4$ yields
   \begin{align*}
      5=t_{4} & \leq \min\{\,t_{3}+t_{1},t_{2}+t_{2}\,\}=\min\{\,9,12\,\}=9
   \end{align*}
   whereas by \ref{thm:ksubad}(i) for $i_1 = i_2 = 3$ and $k=2$ 
   we get
   \begin{align*}
      5=t_{4} & \leq t_{3}+t_{3}=8.
   \end{align*}
  Let us take a look into a class of ideals that extends the above example. We start by gluing a hollow $p$-simplex to a hollow $q$-simplex along a vertex, where $p> q\geq 2$. The resulting simplicial complex $K_{p,q}$ yields a face lattice which is of course atomic. By \cite{Phan}, there is a corresponding monomial ideal $I_{p,q}$ whose LCM-lattice coincides with the face lattice of $K_{p,q}$. Carrying out the correspondence, it turns out that $I_{p,q}$ can be taken to be an ideal in $K[x_{0},x_{1},\ldots,x_{p},y_{0},y_{1},\ldots,y_{q}]$ and generated by the following families of monomials: 
  \begin{itemize}
      \item $x_{0}y_{0}$
      \item $Y\cdot x_{i}$, for $i=1,\ldots,p$
      \item $X\cdot y_{j}$, for $j=1,\ldots,q$
  \end{itemize} where $X=\prod_{i=0}^{p}x_{i}$ and $Y=\prod_{j=0}^{q}y_{j}$.

The sequence of maximal shifts of $I_{p,q}$ is then $t_{0}=0,t_{1}=p+2,t_{2}=p+3,\ldots,t_{q+1}=p+q+2,t_{q+2}=2q+3,t_{q+3}=2q+4,\ldots t_{p+1}=p+q+2$ where the computation is an easy application of \ref{eq:lcm}. One should note that the two maxima in the sequence correspond exactly to the dimensions of the holes of the two simplices. 

By \ref{thm:ksubad}, we get the inequality $t_{q+3} \leq t_{q + 2} + t_{q+2} = 4q + 6$, while the best (nontrivial) inequality attained by \ref{thm:subad} is $t_{q + 3} \leq p + 2q + 5$. Hence, by fixing the value of $q$ and increasing $p$, we see that generalized subadditivity gives a constant bound, while the $k = 0$ case gives a bound growing linearly in $p$.
\end{Example}
Next we show that a synor complex of $\overline{\lcm(\mathfrak{I})}$ 
can be used to construct a minimal free
resolution of $S/\mathfrak{I}$.

Let $\lcm(\mathfrak{I})$ be the LCM-lattice of the monomial ideal $\mathfrak{I}$ in the polynomial ring $S$ and $\big(\,\Sy_*(\,\overline{\lcm(\mathfrak{I})}\,),\partial_*\,\big)$ 
a synor complex for $\overline{\lcm(\mathfrak{I})} = \lcm(\mathfrak{I}) \setminus \{ \hat{0}\}
= \lcm(\mathfrak{I}) \setminus \{ 1 \}$ where as usual
we consider $1$ as the monomial $1 = x_1^ 0\cdots x_n^ 0$.
For a number $i \geq 0$ and a monomial $m \in \overline{\lcm(\mathfrak{I})}$ set 
$$F_i = \bigoplus_{m \in \overline{\lcm(\mathfrak{I})}} \Sy_{i-1}\big(\,\overline{\lcm(\mathfrak{I})}\,\big)^{(m)} \otimes S(-m).$$ Here
we consider $\Sy_{-1}\big(\,\overline{\lcm(\mathfrak{I})}\,\big)$ as concentrated in multidegree 
$1 = x_1^ 0\cdots x_n^ 0$ and write $S(-m)$ for the multigraded 
rank $1$ free $S$-module where 
the multidegree of a monomial $m'$ is $mm'$. 
For $i \geq 1$ we define $\delta_i : F_i \rightarrow F_{i-1}$ in the 
following way.
Let $\zeta \in \Sy_{i-1}\big(\,\overline{\lcm(\mathfrak{I})}\,\big)^{(m)}$ be a synor chain.  
Since a synor complex of $\overline{\lcm(\mathfrak{I})}$ is strictly
$\overline{\lcm(\mathfrak{I})}$-graded we have that
$$\partial_{i-1}(\zeta) = \sum_{m' \in \overline{\lcm(\mathfrak{I})_{< m}}} \zeta_{m'}$$  
for chains $\zeta_{m'} \in 
\Sy_{i-2}\big(\,\overline{\lcm(\mathfrak{I})}\,\big)^{(m')}$. We then set
$$\delta_i(\zeta \otimes 1) = \sum_{m' \in \overline{\lcm(\mathfrak{I})}_{< m} }\frac{m}{m'} \,\zeta_{m'} \otimes 1.$$
Note the fact that $\big(\,\Sy_*\big(\,\overline{\lcm(\mathfrak{I})}\,\big),\partial_*\,\big)$ is strictly $\overline{\lcm(\mathfrak{I})}$-graded
implies that $\delta_*$ is a well defined homomorphism of
multigraded free $S$-modules.

We call $(F_*,\delta_*)$ a \defn{synor-resolution} of $S/\mathfrak{I}$.

\begin{Theorem} \label{thm:synorres}
    A synor-resolution of a monomial ideal $\mathfrak{\mathfrak{I}}$ is a
    minimal free resolution of $S/\mathfrak{I}$.
\end{Theorem}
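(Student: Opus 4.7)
The plan is to verify that $(F_*,\delta_*)$ is a free resolution of $S/\mathfrak{I}$ and then deduce minimality by a rank count. The complex identity $\delta_i\circ\delta_{i+1}=0$ follows directly from $\partial\circ\partial=0$ in the synor complex combined with the telescoping of coefficients $(m/m')(m'/m'')=m/m''$: for $\zeta\in\Sy_i(\overline{\lcm(\mathfrak{I})})^{(m)}$, iterating the boundary produces $\delta_i\delta_{i+1}(\zeta\otimes 1)=\sum_{m''}(m/m'')\bigl(\sum_{m'':m''<m'<m}\zeta_{m',m''}\bigr)\otimes 1$, and the inner sum vanishes for each $m''$ by $\partial^2=0$.

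The heart of the argument is exactness, which I would check one multidegree at a time. For $\alpha\in\ZZ_{\geq 0}^n$, set $J_\alpha=\{m\in\overline{\lcm(\mathfrak{I})}:m\mid x^\alpha\}$, an order ideal in $\overline{\lcm(\mathfrak{I})}$. Via the identification $\zeta\otimes (x^\alpha/m)\leftrightarrow\zeta$, and using that $(m/m')\cdot(x^\alpha/m)=x^\alpha/m'$, the multidegree-$\alpha$ strand of $(F_*,\delta_*)$ becomes identified, as a chain complex, with the synor subcomplex $\Sy_*(\overline{\lcm(\mathfrak{I})})^{J_\alpha}$ shifted by one in homological degree. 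Axiom (S2) then identifies $H_i(F_*)_\alpha$ with $\redH_{i-1}(J_\alpha)$.

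The key combinatorial observation is that $J_\alpha$ is either empty (precisely when $x^\alpha\notin\mathfrak{I}$) or has a unique maximum $m_\alpha=\bigvee J_\alpha$, which lies in $J_\alpha$ because the join of monomials dividing $x^\alpha$ again divides $x^\alpha$. In the non-empty case the order complex of $J_\alpha$ is a cone with apex $m_\alpha$, hence contractible and all reduced homology vanishes; in the empty case only $\redH_{-1}(\emptyset)=\KK$ contributes. Assembling these cases yields $H_i(F_*)_\alpha=0$ for $i\geq 1$ and $H_0(F_*)_\alpha=(S/\mathfrak{I})_\alpha$, so $(F_*,\delta_*)$ is a free resolution of $S/\mathfrak{I}$.

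Minimality is then immediate: each coefficient $m/m'$ with $m'<m$ in $\overline{\lcm(\mathfrak{I})}$ lies in $\mathfrak{m}=(x_1,\ldots,x_n)$, so tensoring with $\KK=S/\mathfrak{m}$ annihilates the differential of $F_*$. Hence $\beta_{i,m}(S/\mathfrak{I})=\dim_\KK\Sy_{i-1}(\overline{\lcm(\mathfrak{I})})^{(m)}$, which by (S1) equals $\dim_\KK\redH_{i-2}((1,m))$, matching \eqref{eq:lcm}; so the ranks of the $F_i$ coincide exactly with the total Betti numbers, forcing minimality. The only substantive step in the whole argument is the multigraded strand identification together with the observation that $J_\alpha$ is contractible or empty; everything else is essentially formal, so I expect no real obstacle beyond the careful bookkeeping of the multigraded coefficients.
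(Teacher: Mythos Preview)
Your argument is correct and follows the same route as the paper: identify each multigraded strand with the synor complex restricted to the order ideal $J_\alpha$, then invoke (S2) together with the contractibility of $J_\alpha$ (or its emptiness when $x^\alpha\notin\mathfrak{I}$). Your write-up is in fact more complete than the paper's, which omits the explicit minimality check; your observation that every coefficient $m/m'$ lies in $\mathfrak m$ (equivalently, that the ranks match the multigraded Betti numbers via (S1) and \eqref{eq:lcm}) is exactly the missing step.
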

\begin{proof}
    First we must show that the synor-resolution $(F_*,\delta_*)$  is a resolution of $S/\mathfrak{I}$.
    
    We have seen that $(F_*,\delta_*)$ is a sequence of 
    homomorphisms of multigraded free $S$-modules. To prove that it
    indeed is an exact complex of multigraded modules it suffices
    to verify for each monomial, or equivalently multidegree, $m$, 
    that the $m$-graded part  $(F_*^ {(m)},\delta_*|_{F_*^ {(m)}})$ of 
    $(F_*,\delta_*)$ is an
    exact sequence of 
    homomorphisms of vector spaces. 
    From $F_i^{(m)} \cong \displaystyle{\bigoplus_{m' \in \overline{\lcm(\mathfrak{I})}_{\leq m}}} \Sy_{i-1}(\overline{\lcm(\mathfrak{I})})^{(m')}$ 
    and the construction of $\delta_*$ we deduce that $(F_*^ {(m)},\delta)$
    is an exact
    sequence if and only if for $\JJJ= \overline{\lcm(\mathfrak{I})}_{\leq m}$ we have that
    $(\Sy_*(\LLL)^\JJJ,\partial_*)$ is exact.
    The latter is satisfied by (S2) and the fact that 
    $\redH_*(\JJJ) = 0$ since $\JJJ$ is a cone over $m$. 

    It remains to verify that the cokernel of $\delta_1$ is $S/\mathfrak{I}$. 
    Let $m_1,\ldots, m_r$ be the minimal monomial generators 
    of $\mathfrak{I}$.  These then are also the
    minimal elements of $\overline{\lcm(\mathfrak{I})}$. It follows that 
    $\Sy_0(\LLL)^{(m)} = \KK$ if $m = m_i$ for some $i$
    and $0$ otherwise. For 
    $1 \in \Sy_0(\LLL)^{(m)}$ we have  $\partial (1) = 1$ and
    hence $\delta_1(1) = m (1 \otimes 1)$. It follows
    that the image of $\delta_1$ in 
    $F_{0} = \Sy_{-1}(\LLL) \otimes S(-1)
    \cong S$ is $\mathfrak{I}$. This completes the proof.
\end{proof}
The preceding proof uses arguments very similar to the proof of Proposition 1.2 in \cite{BS}. On the other hand the setting here is purely homological and there does not have to be a cellular complex whose cellular chain complex is
the synor complex.

\end{document}